\newtheorem{remark}{Remark}[section]
\newtheorem{assumption}{Assumption}[section]
\newcommand{\op}{\operatorname}
\newcommand{\dx}{\mathrm{d}x}
\newcommand{\norm}[1]{\left\|#1\right\|}
\newcommand{\normE}[2]{\norm{#1}_{H^{1}_{\kappa}\prnt{#2}}}
\newcommand{\normL}[2]{\norm{#1}_{L^2\prnt{#2}}}
\newcommand{\normb}[1]{\norm{#1}_{a_H,D}}
\newcommand{\dy}{\mathrm{d}y}
\newcommand{\prnt}[1]{\left( #1 \right)}
\numberwithin{equation}{section}
\title{On the convergence analysis of MsFEM with oversampling: Interpolation error}
\author{Guanglian Li\thanks{Department of Mathematics, The University of Hong Kong, Pok Fu Lam Road, Hong Kong SAR. ({\tt{lotusli@maths.hku.hk}})
This research is supported by the Hong Kong Research Grants Council, Early Career Scheme (project 27301921).}}
\date{\today}
\begin{document}

\maketitle

\begin{abstract}
In this paper, we investigate the approximation properties of two types of multiscale finite element methods with oversampling as proposed in [Hou \& Wu, {\textit{J. Comput. Phys.}}, 1997] and [Efendiev, Hou \& Wu, \textit{SIAM J. Numer. Anal.}, 2000] without scale separation. 
We develop a general interpolation error analysis for elliptic problems with highly oscillatory rough coefficients, under the assumption of the existence of a macroscopic problem with suitable $L^2$-accuracy. The distinct features of the analysis, in the setting of highly oscillatory periodic coefficients, include: (i) The analysis is independent of the first-order corrector or the solutions to the cell problems, and thus independent of their regularity properties; (ii) The analysis only involves the homogenized solution and its minimal regularity. We derive an interpolation error $\mathcal{O}\left(H+\frac{\epsilon}{H}\right)$ with $\epsilon$ and $H$ being the period size and the coarse mesh size, respectively, when the oversampling domain includes one layer of elements from the target coarse element.
\end{abstract}
\begin{keywords}
multiscale finite element method, oversampling, interpolation error, rough coefficient
\end{keywords}

\begin{AMS}
65N30, 65N15, 65N12
\end{AMS}

\section{Introduction}
Let $D\subset
\mathbb{R}^d$ ($d=1,2,3$) be an open bounded convex domain with a boundary $\Gamma =\partial D$. In this work we consider the following elliptic boundary value problem with highly oscillatory rough coefficients:
 \begin{equation*}
\left\{\begin{aligned}
-\nabla\cdot\left(\kappa^{\epsilon}\nabla u^{\epsilon}\right)&=f &&\quad\text{ in }D,\\
u^{\epsilon}&=0 &&\quad\text{ on } \Gamma,
\end{aligned}\right.
\end{equation*}
with $\kappa^{\epsilon}\in (L^{\infty}(D))^{d\times d}$ being a uniformly elliptic, bounded and matrix-valued permeability coefficient. The parameter $\epsilon\ll 1$ denotes the microscopic scale underlying the problem. Partial differential equations (PDEs) with multiple scales arise in many practical applications, e.g., photonic crystals, metamaterials, waveguides, and transmission lines. The presence of a small parameter $\epsilon$ makes standard numerical methods with (local) polynomial basis functions computationally infeasible or prohibitive. These observations have motivated the intensive development of homogenization theory and multiscale numerical methods in the past few decades; see, e.g., \cite{MR2916381,MR4298217,MR4891112,MR4754301,MR4628018,MR2477579,MR4520724,MR4700411,fu2019edge,MR4490299,MR4372648,MR4191211,MR3971243,MR3177856}.

Among the existing numerical methods  for solving problems with highly oscillatory coefficients, multiscale finite element methods (MsFEMs) proposed by Hou and Wu \cite{MR1455261} represent one of the most popular and successful class of multiscale numerical methods. This class of methods enjoys ease of implementation and demonstrates impressive empirical performance across a wide range of applied problems, and thus has received a lot of attention. However, the theoretical analysis of the methods has lagged behind. The method was analyzed in \cite{MR1642758} and \cite{MR1740386,MR2119937} for MsFEMs without oversampling and with oversampling for elliptic problems with highly-oscillatory periodic coefficients under certain assumptions, respectively. Since then, further convergence analysis under more relaxed assumptions has also been proposed (see, e.g., \cite{MR2399546,MR2982460,MR3123820,MR4686842}).

In this work, we shall develop a general interpolation error analysis for MsFEMs with oversampling under the assumption of the existence of a macroscopic equation with $L^2$-accuracy, cf. Assumption \ref{ass:macroscale}. Together with the stability of the discrete scheme, the interpolation error analysis would give a full analysis of MsFEMs with oversampling. It is worth noting that in the setting of highly oscillatory periodic coefficients, the analysis does not rely on the solutions to the cell problems or the first-order corrector, which have played a fundamental role in the existing theoretical studies. Specifically, the standard approach for the case of highly oscillatory periodic coefficients proceeds in three steps \cite{MR1642758}. First, one utilizes multiscale basis functions to interpolate the homogenized solution as an approximator to the exact solution $u^{\epsilon}$ directly. Second, one performs a two-scale expansion of  the exact solution in the global domain and the approximator in each subdomain arising from the two-scale expansion for multiscale basis functions. Third and finally, one expresses the error in terms of the two-scale expansion and then estimates each term separately. Hence the standard analysis strategy relies heavily on the regularity of the homogenized solution, the solutions to the cell problem, and the first-order corrector.

In contrast to the standard paradigm, we shall develop a new proof that relies only on the local approximator \eqref{eq:loc-def} in each oversampled domain and thus the local error estimate therein, cf. Lemma \ref{lem:local-approximate}. To derive the local error estimate, we first derive an $L^2$- a priori estimate of the harmonic extension for the macroscopic equation, given the $L^2$-Dirichlet boundary data, cf. Theorem \ref{thm:veryweak-homogenized}, and then employ several auxiliary functions defined by the local elliptic operator and local macroscopic operator subject to certain linear boundary conditions to obtain an $L^2$-error estimate on the oversampled domain. Then we derive the $H^1$-error estimate over each coarse element using a Caccioppoli type estimate. The derivation of the local error estimate is inspired by the convergence analysis in \cite[Lemma 4.4]{MR3939320}, in which partition of unity functions are employed in the construction of multiscale basis functions to enforce conformity and play a critical role in the analysis. In sharp contrast, the multiscale space with oversampling is nonconforming and the local error estimate developed in Lemma \ref{lem:local-approximate} can avoid the usage of partition of unity functions. Next, we employ an auxiliary global function defined elementwisely using the local approximator, which enjoys a good approximation property to the exact solution. However, it does not belong to the multiscale FEM space since it may take multiple values over each node. To obtain a good approximator in the multiscale FEM space, we utilize an alternative global function that uses the average of multiple values on each node. We establish in Theorems \ref{theorem:global-approximation} and \ref{theorem:global-approximation-type2} an error estimate of MsFEMs with an arbitrary number of fine-scale oversampled layers for MsFEM with oversampling proposed in \cite{MR1455261} and \cite{MR1740386}, respectively. This is the main theoretical achievement of the work. Furthermore, in the setting with highly oscillatory (locally) periodic coefficients, we show that Assumption \ref{ass:macroscale} is indeed valid, and obtain a global error estimate $\mathcal{O}(H+\frac{\epsilon}{H})$ when the oversampled layers contain one coarse element, where $H$ is the coarse mesh size. This result is consistent with the results from the work \cite{MR1740386}, which is derived using the standard approach in the context of highly oscillatory periodic coefficients.

The remainder of this paper is organized as follows. We describe in Section \ref{sec:problem} the model setting of elliptic problems with heterogeneous coefficients, and its macroscopic model, and then MsFEMs with oversampling in Section \ref{sec:method}. Next, in Section \ref{sec:estimate} we present the main interpolation error analysis. Moreover, we discuss in Section \ref{sec:structure} the convergence for high oscillatory periodic and locally periodic coefficients. Finally, we conclude with a brief summary in Section \ref{sec:conc}. Throughout, the notation $A\lesssim B$ denotes $A\leq C B$ for some constant
$C$ independent of the microscale $\epsilon$, fine mesh size $h$, and coarse mesh size $H$.


\section{Problem setting}\label{sec:problem}
In this section, we formulate the heterogeneous elliptic problem. We look for
a function $u^{\epsilon}\in V:=H^{1}_{0}(D)$ such that
\begin{equation}\label{eq:model}
\left\{\begin{aligned}
\mathcal{L}u^{\epsilon}:=-\nabla\cdot\left(\kappa^{\epsilon}\nabla u^{\epsilon}\right)&=f &&\quad\text{ in }D,\\
u^{\epsilon}&=0 &&\quad\text{ on } \Gamma,
\end{aligned}\right.
\end{equation}
where the force term $f\in L^2(D)$ and the permeability coefficient
$\kappa^{\epsilon}\in \mathcal{M}(\alpha,\beta;D)$, with
\begin{align*}
\mathcal{M}(\alpha,\beta; D):=\left\{A\in (L^{\infty}(D))^{d\times d}: \alpha|\bm{\xi}|^2\leq (A(x)\bm{\xi},\bm{\xi})\leq\beta|\bm{\xi}|^2,\;\forall\bm{\xi}\in\mathbb{R}^d\text{ and }\op{a.e.}, x\in D\right\},
\end{align*}
 for some $\beta\geq\alpha>0$. Here, the notation $(\cdot,\cdot)$ denotes the inner product in $\mathbb{R}^d$ and $|\cdot|$ the Euclidean norm. Note that the presence of the small parameter $\epsilon$ in the permeability coefficient $\kappa^\epsilon$ renders directly
 solving problem \eqref{eq:model} very challenging, since resolving the problem to the finest scale would incur
prohibitively high computational cost.

Then the variational formulation of problem \eqref{eq:model} reads: find $u^{\epsilon}\in V$ such that
	\begin{equation}\label{eq:weak}
	a(u^{\epsilon},v)=b(v),\quad \forall v\in V,
	\end{equation}
    with the bilinear form $a(\cdot,\cdot):V\times V\to\mathbb{R}$ and linear form $b(\cdot):V\to\mathbb{R}$, defined respectively, by
\begin{align*}
a(w,v):=\int_{D}\kappa^{\epsilon}\nabla w\cdot \nabla v\dx\quad \text{and}\quad
b(v):=\int_{D}fv \dx,\quad \forall w,v\in V.
\end{align*}
 The Lax-Milgram theorem implies the well-posedness of problem \eqref{eq:weak}.
 Let $\normE{\cdot}{\Omega}$ be the associate norm for any subdomain $\Omega\subset D$, i.e.,
 \begin{align*}
\normE{v}{\Omega}:=\left(\int_{\Omega}\kappa^{\epsilon}|\nabla v|^2\mathrm{d}x\right)^{1/2}.
 \end{align*}

Next, we give the assumption on the existence of a macroscopic problem associated with problem \eqref{eq:model} which is the only assumption we make on the problem setting.
\begin{assumption}[Macroscopic problem]\label{ass:macroscale}
There exists a macroscopic problem associated with problem \eqref{eq:model}, with the corresponding solution $u_0\in V\cap H^2(D)$ satisfying
\begin{equation}\label{eq:model-hom-abstract}
\left\{\begin{aligned}
\overline{\mathcal{L}}u_0:=-\nabla\cdot\left(\overline{\kappa}\nabla u_0\right)&=f &&\quad\text{ in }D,\\
u_0&=0 &&\quad\text{ on } \Gamma,
\end{aligned}\right.
\end{equation}
for some $\overline{\kappa}\in \mathcal{M}(\overline{\alpha},\overline{\beta};D)\cap (W^{1,\infty}(D))^{d\times d}$ with parameters $0<\overline{\alpha}\leq \overline{\beta}$. Moreover, there exists $\delta>0$ such that
\begin{align}\label{eq:1/3-abstract}
\normL{u^{\epsilon}-u_0}{D}\lesssim \epsilon^{\delta} \|f\|_{L^2(D)},
\end{align}
where the involved constant is independent of the boundary data and domain.
\end{assumption}

Note that Assumption \ref{ass:macroscale} is reasonable since there exists $\overline{\kappa}\in \mathcal{M}(\overline{\alpha},\overline{\beta};D)$ such that $\kappa^{\epsilon}$ converges to $\overline{\kappa}$ as $\epsilon\to 0$ in the sense of $H$-convergence \cite{MR1493039}, or $G$-convergence when $\kappa^{\epsilon}$ is symmetric \cite{MR348255}. This implies the strong convergence of $u^{\epsilon}\to u_0$ in $L^2(D)$-norm and weak convergence in $H^1_0(D)$-norm (up to a subsequence). Assumption \ref{ass:macroscale} can be viewed as a quantitative version of the convergence, and it is not restricted to the concept of $H$-convergence or $G$-convergence. Moreover, the uniqueness of the macroscopic problem is not needed. Assumption \ref{ass:macroscale} is the basic assumption for the finite element heterogeneous multiscale method (FE-HMM) (see, e.g., \cite[Section 4]{MR2916381}), with convergence to $u_0$ as its fundamental objective. 
Note also that the macroscopic elliptic operator $\overline{\mathcal{L}}$ does not necessarily take the same form as the microscale one $\mathcal{L}$. The analysis below allows more general elliptic operators, e.g., the homogenized problem in \cite{MR1493040}.

We use also the concept of macroscopic component, which plays a crucial role in the subsequent analysis.
\begin{definition}[Macroscopic component]\label{def:macro-comp}
For any convex subdomain $\Omega\subset D$, let $v^{\epsilon}\in H^1(\Omega)$ and $g\in L^2(\Omega)$ be such that 
\begin{align*}
 \mathcal{L}v^{\epsilon}=g\quad \text{in }\Omega.
\end{align*}
Then $v_0\in H^1(\Omega)$ is called the macroscopic component of $v^{\epsilon}$ on $\Omega$, if it satisfies
\begin{equation*}
\left\{
\begin{aligned}
 \overline{\mathcal{L}}v_0&=g&&\quad\text{ in }\Omega,\\
v_0&=v^{\epsilon} &&\quad\text{ on } \partial\Omega.
\end{aligned}
\right.
\end{equation*}
\end{definition}
\section{MsFEM with oversampling}\label{sec:method}
In this section, we recall two popular types of MsFEMs with oversampling \cite{MR1740386,MR1455261}. First, we describe the finite element (FE) discretization of problem \eqref{eq:weak}.
Let $\mathcal{T}_H$ be a quasi-uniform partition of the domain $D$ into a Cartesian mesh composed of closed quadrilaterals ($d=2$) or closed hexahedra ($d=3$) without hanging nodes and with a coarse mesh size $H$ such that 
$\mathcal{T}_h$ is its subdivision by regular, possibly nonuniform partition into closed  quadrilaterals (d=2) or closed hexahedra (d=3) (with a fine mesh size $h$). Let $V_H$ be the standard $H^1$-conforming piecewise $d$-linear polynomial space
 \begin{align*}
V_H=\{v\in V:v|_K\in Q^1(K) \quad\forall K\in\mathcal{T}_H\}.
 \end{align*}
Let $\mathcal{N}_H$ and $\mathcal{E}_H$ be the sets of interior nodes and interior edges for the coarse mesh $\mathcal{T}_H$, respectively. We define the index set $\mathcal{J}_H$ such that $\mathcal{N}_H:=\{x_p:p\in\mathcal{J}_H\}$. Let $K^{m}$ be the subdomain associated with an element $K\in \mathcal{T}_H$, defined by
$K^m=\bigcup\left\{\tau\in\mathcal{T}_h:\text{dist}(\tau,K)\leq mh\right\}$. We denote by $d_K$ the number of vertices on $K^m$. Let $\{x_{K,j}\}_{j=1}^{d_K}$ and $\{x_{K^m,j}\}_{j=1}^{d_K}$ be the nodal points on each coarse element $K$ and its oversampled subdomain $K^m$, respectively. See  Fig. \ref{fig:grid} for a schematic illustration of the two-scale mesh in the two-dimensional case. We denote by $L_{K^m}:=\op{diam}(K^m)$ the diameter of $K^m$. By the construction, we have
\begin{align}\label{eq:diam-subdomain}
L_{K^m}\leq \sqrt{d}(H+2mh).
\end{align}
Throughout, we assume
\begin{align}\label{eq:c-diameter}
L_{K^m}\approx H, \quad\forall K\in\mathcal{T}_H,\quad \text{and}\quad \epsilon\ll H.
\end{align}
Moreover, we assume the following finite-overlapping property: there exists a finite number $\Lambda>1$ independent of $(h,H)$ such that
\begin{align}\label{eq:finite-overlap}
\Lambda=\max\left\{\#\Lambda_{\tau}:\tau\in\mathcal{T}_h\right\}, \quad \text{ with }\Lambda_{\tau}=\{K'\in\mathcal{T}_H:{\tau}\cap{K'}^m\neq \emptyset\}.
\end{align}
Next, we define a sequence of cutoff functions $\{\eta_K\}_{K\in\mathcal{T}_H}$ subordinate to the cover $\{K^m\}_{K\in\mathcal{T}_H}$, which satisfies that for some $C_{\infty}$ and $C_{\op{G}}$ independent of $(\epsilon,h,H)$, the following properties hold
\begin{equation}\label{eq:pum-property}
\begin{aligned}
\eta_{K}&=1 \text{ in } {K},\quad {\text{supp}(\eta_{K})}\subset {K^{m}},\quad
 \| \eta_{K}\|_{L^{\infty}({K^{m}})}\leq C_{\infty},\quad
\|\nabla \eta_{K}\|_{L^{\infty}({K^m})}\leq (mh)^{-1}C_{\op{G}}.
\end{aligned}
\end{equation}

Then we define a nonconforming multiscale FE space. To eliminate the so-called resonance errors \cite{MR1898136} (i.e., the numerical accuracy deteriorates as $H$ approaches $\epsilon$), an oversampling strategy can be adopted in the construction.
Let $\gamma_{K}^{m}:=\partial K^m\backslash\Gamma$ and $\Gamma_K^m:=\partial K^m\cap\Gamma$ denote the interior boundary and exterior boundary for each subdomain $K^m$. We first construct intermediate bases $\{\Phi_{K,i}^{\operatorname{ms},m}\}_{i=1}^{d_K}$ in each oversampled domain $K^m$, by finding $\Phi_{K,i}^{\operatorname{ms},m}\in H^1_{\Gamma,0}(K^m)$ such that
 \begin{equation}\label{eq:lp-os}
 \left\{
 \begin{aligned}
\mathcal{L}\Phi_{K,i}^{\operatorname{ms},m}&=0 &&\text{in } K^m, \\
\Phi_{K,i}^{\operatorname{ms},m}&=\psi_{K^m,i} &&\text{on }\gamma_{K}^{m},
\end{aligned}\right.
\end{equation}
where $\psi_{K^m,i}$ denotes all $d$-linear functions on the subdomain $K^m$ satisfying $\psi_{K^m,i}(x_{K^m,j})=\delta_{ij}$, and $H^1_{\Gamma,0}(K^m):=\{v\in H^1(K^m):v|_{\Gamma_{K}^{m}}=0\}$.

Now we can construct the first type of local multiscale (LMS) basis functions (Type-1).
\begin{definition}[LMS-type 1]
$\{\phi_{K,p}^{\operatorname{ms},m}\}_{p=1}^{d_K}$
is derived from $\{\Phi_{K,i}^{\operatorname{ms},m}\}_{i=1}^{d_K}$ to ensure the Lagrange interpolation property over $
 \{x_{K,p}\}_{p=1}^{d_K}$,
 i.e.,
\begin{equation}\label{eq:lagrange-property}	\phi_{K,p}^{\operatorname{ms},m}=\sum_{j=1}^{d_K}c_{pj}\Phi_{K,j}^{\operatorname{ms},m}|_K,
\end{equation}
where $c_{pj}\in\mathbb{R}$ are constants determined by the condition $\phi_{K,p}^{\operatorname{ms},m}({x}_{K,q})=\delta_{pq}$ for all $1\leq q,p\leq d_K$. 
\end{definition}

Note that the H\"{o}lder estimate for elliptic problems \cite[Theorems 8.22 and 8.29]{MR1814364} ensures that $\Phi_{K,j}^{\operatorname{ms},m}\in C^{0,\alpha}(K^m)$ for some $\alpha\in (0,1)$. Hence, \eqref{eq:lagrange-property} is well defined.

Note that the multiscale basis functions proposed in \cite{MR1740386,MR2119937} impose Lagrange properties for the zeroth order term in \eqref{eq:lagrange-property} in the setting of highly oscillatory periodic coefficients. These two proposals are slightly different and allow for discontinuity not only along the interior edges $ \mathcal{E}_H$ but also the coarse nodes $\mathcal{N}_H$. In the same manner, we can define such multiscale basis functions using the local macroscopic problem. Let $\overline{\Phi}_{K,i}^{\operatorname{ms},m}\in H^1_{\Gamma,0}(K^m)$ be the macroscopic component of $\Phi_{K,i}^{\operatorname{ms},m}$ on the oversampled subdomain $K^m$ (i.e., the solution to \eqref{eq:lp-os} with the heterogeneous elliptic operator $\mathcal{L}$ replaced by the macroscopic operator $\overline{\mathcal{L}}$). Then we construct the second type of local multiscale basis functions.
 \begin{definition}[LMS-type 2]
 $\{{\varphi}_{K,p}^{\operatorname{ms},m}\}_{p=1}^{d_K}$ is derived
 from $\{\Phi_{K,i}^{\operatorname{ms},m}\}_{i=1}^{d_K}$ to ensure the Lagrange interpolation property of its macroscopic component over $\{x_{K,p}\}_{p=1}^{d_K}$,
 i.e.,
\begin{equation}\label{eq:lagrange-property-2}	\varphi_{K,p}^{\operatorname{ms},m}=\sum_{j=1}^{d_K}c_{pj}\Phi_{K,j}^{\operatorname{ms},m}|_K,
\end{equation}
where $c_{pj}\in\mathbb{R}$ are constants determined by the condition $$\overline{\varphi}_{K,p}^{\operatorname{ms},m}({x}_{K,q})=\sum_{j=1}^{d_K}c_{pj}\overline{\Phi}_{K,j}^{\operatorname{ms},m}({x}_{K,q})
=\delta_{pq},\quad  \forall 1\leq q,p\leq d_K,$$ 
where $\overline{\varphi}_{K,p}^{\operatorname{ms},m}$ denotes the macroscopic component of ${\varphi}_{K,p}^{\operatorname{ms},m}$ on the oversampled subdomain $K^m$.
\end{definition} 

Let $v=\phi$ or $\varphi$. The multiscale basis function associated with each interior node $x_i$ for $i\in\mathcal{J}_H$ is given by
 \begin{align*}
v^{\operatorname{ms},m}_{i}|_K:=v^{\operatorname{ms},m}_{K,j},\quad \text{for }K\in\mathcal{T}_H \text{ with } x_{K,j}=x_i\in {K}\cap\mathcal{N}_H.
 \end{align*}
 By the construction, the basis function $v^{\operatorname{ms},m}_{i}$ allows for discontinuity along the interior edges $\mathcal{E}_H$ and thus $v^{\operatorname{ms},m}_{i}\notin H^1_0(D)$. 
 
 Then the multiscale FE space with oversampling is defined by
 \begin{equation}\label{eq:v-ov}
 \begin{aligned}
V_H^{\operatorname{ms},m;v}:=\operatorname{span}\{v^{\operatorname{ms},m}_{i}: i\in\mathcal{J}_H\}
\not\subset H_0^1(D),\;v=\phi, \varphi.
\end{aligned}
\end{equation}
\begin{remark}[Domain of multiscale basis functions and linearity over $\partial K^m$]
Note that each local multiscale basis function $v^{\operatorname{ms},m}_{K,i}$ can be naturally extended to 
$K^m$ by its construction, and that its domain can be either $K$ or $K^m$ from context to context. Note also that $v^{\operatorname{ms},m}_{i}|_{\partial K^m}$ is $(d-1)$-linear for $v=\phi$ or $\varphi$.
\end{remark}

Let $v=\phi$ or $\varphi$. Since $v_{K,p}^{\operatorname{ms},m}$ is the harmonic extension over each coarse element $K$ and $d-1$-linear on its boundary $\partial K^m$, we derive
 \begin{align}\label{eq:loc-harmonic-energy}
\normE{v_{K,p}^{\operatorname{ms},m}}{K}\lesssim  (H+2mh)^{d/2-1},\quad\text{for }v=\phi,\varphi.
 \end{align} 
Next, we define a broken bilinear form $a_H(\cdot,\cdot)$ on $\oplus_{K\in\mathcal{T}_H}H^1(K)$ by
\begin{align*}
a_H(v,w):=\sum_{K\in\mathcal{T}_H}\int_{K} \kappa^{\epsilon}\nabla v\cdot\nabla w, \quad
\forall v,w\in\oplus_{K\in\mathcal{T}_H}H^1(K)
 \end{align*}
 and denote the associate norm by
$\normb{v}:=a_H(v,v)^{1/2}$. Obviously, the following equivalence of $\normb{\cdot}$ and $\normE{\cdot}{\Omega}$ in $V$ holds
\begin{align*}
\normE{v}{D}\lesssim \normb{v}\lesssim \normE{v}{D}, \quad \forall v\in V.
\end{align*}
With the multiscale space $V_H^{\operatorname{ms},m;v}$ for $v=\phi,\varphi$ and the polynomial space $V_H$, the Petrov-Galerkin multiscale FEM reads \cite{MR2119937}: find $u_H^{\operatorname{ms},m;v}\in V_H^{\operatorname{ms},m;v}$ such that
	\begin{equation}\label{eq:pet}
		a_H( u_H^{\operatorname{ms},m;v},w)=f(w),\quad \forall w\in V_H.
	\end{equation}
	\begin{figure}[!ht]
		\centering
		\begin{tikzpicture}[scale=1.5]
			\draw[step=0.25, gray, thin] (-0.4, -0.4) grid (4.4, 4.4);
			\draw[step=1.0, black, very thick] (-0.4, -0.4) grid (4.4, 4.4);
			\foreach \x in {0,...,4}
			\foreach \y in {0,...,4}{
				\fill (1.0 * \x, 1.0 * \y) circle (1.5pt);
			}
			\fill[brown, opacity=0.4] (1.0, 1.0) rectangle (2.0, 2.0);
			\node at (1.5, 1.5) {$K$};
            \node at (1, 0.8) {$x_{K,1}$};
            \node at (2, 0.8) {$x_{K,2}$};
            \node at (2, 2.2) {$x_{K,3}$};
            \node at (1, 2.2) {$x_{K,4}$};
			\draw [dashed, very thick, fill=gray, opacity=0.6] (0.5, 0.5) rectangle (2.5, 2.5);
			\node at (2.2,1.4) {$K^2$};
            \node at (0.5, 0.3) {$x_{K^2,1}$};
            \node at (2.5, 0.3) {$x_{K^2,2}$};
            \node at (2.5, 2.7) {$x_{K^2,3}$};
            \node at (0.5, 2.7) {$x_{K^2,4}$};
            \fill (0.5, 0.5) circle (1pt);
            \fill (2.5, 0.5) circle (1pt);
            \fill (2.5, 2.5) circle (1pt);
            \fill (0.5, 2.5) circle (1pt);
			\draw [dashed, very thick, fill=cyan, opacity=0.5] (3.25, 1.25) rectangle (3.5, 1.5);
			\node at (3.375, 1.375) {$\tau$};
		\end{tikzpicture}
	
		\caption{A schematic illustration of the two-scale mesh, a fine element $\tau$, a coarse element $K$, oversampling coarse element $K^m$, and $d_K$ vertices of $K$ and $K^m$ with $m=2$ and $d_K=4$.}
			\label{fig:grid}
	\end{figure}
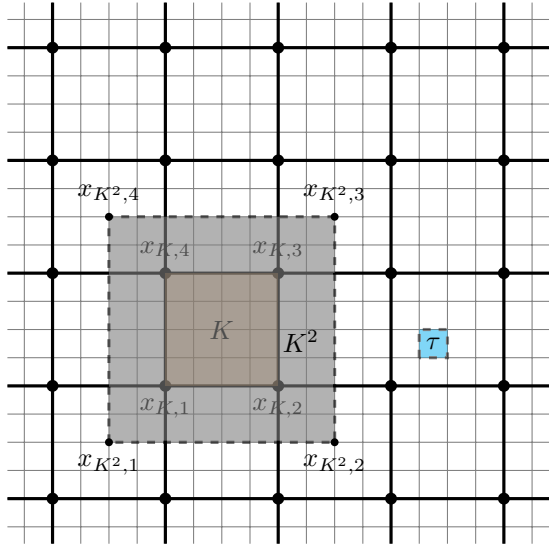

 \section{Interpolation error estimate}\label{sec:estimate}

In this section, we analyze the approximation properties of the multiscale space $V_H^{\operatorname{ms},m;v}$ defined in \eqref{eq:v-ov} to the exact solution $u^{\epsilon}$ of problem \eqref{eq:model} for $v=\phi$ or $\varphi$. The interpolation error analysis plays a fundamental role in the analysis of MsFEM. Our analysis proceeds in three steps. First, we derive $L^2$-a priori estimates for the homogeneous macroscopic operator in each oversampled subdomain $K^m$, under the $L^2$-Dirichlet data in Assumption \ref{ass:macroscale}. Next, we construct a local approximator using the intermediate multiscale bases $\{\Phi_{K,i}^{\operatorname{ms},m}\}_{i=1}^{d_K}$ in each oversampled subdomain $K^m$ and establish their approximation properties to $u^{\epsilon}|_{K^m}$ in Lemma \ref{lem:local-approximate}. The discrepancy of their macroscopic components is derived in Lemma \ref{lem:local-approximate-macroscopic}. Finally, we derive the approximation properties of $V_H^{\operatorname{ms},m;v}$ constructively in Theorems \ref{theorem:global-approximation} and \ref{theorem:global-approximation-type2}, respectively.

 \subsection{Local $L^2$-error estimate for the macroscopic problem}
Recall that $K^m$ is an oversampling element for $K\in\mathcal{T}_H$ and $m>0$. For any $g\in L^2(\partial K^m)$, we define the following homogeneous macroscopic problem: find $v\in L^2(K^m)$ such that
\begin{equation}\label{eq:pde-very}
\left\{\begin{aligned}
\overline{\mathcal{L}} v=-\nabla\cdot(\overline{\kappa}\nabla v)&=0 && \text{ in } K^m,\\
v&=g &&\text{ on }\partial K^m.
\end{aligned}\right.
\end{equation}
Our goal is to derive an $L^2$-estimate of the solution $v$ explicitly in terms of $\op{diam}(K^m)$.

First, we define a Lions-type variational formulation for problem \eqref{eq:pde-very} \cite[Section 6, Chapter 2]{MR0350177}. Let the test space $X(K^m)\subset H^1_{0}(K^m)$ be defined by
\begin{align}\label{eq:test-space}
X(K^m):=\left\{ z\in H^1_{0}(K^m):\overline{\mathcal{L}} z\in L^2(K^m)\right\}.
\end{align}
The test space $X(K^m)$ is endowed with the graph norm $\|\cdot\|_{X(K^m)}$:
\[
\forall z\in X(K^m):\|z\|_{X(K^m)}^2=\normE{z}{K^m}^2+\|\overline{\mathcal{L}}_{K^m} z\|_{L^2(K^m)}^2.
\]
Next, we employ a nonstandard variational form in the spirit
of the transposition method \cite{MR0350177}, and seek $v\in L^2(K^m)$ such that
\begin{align}\label{eq:nonstd-variational}
\int_{K^m}v\cdot\overline{\mathcal{L}} z\dx=-\int_{\partial K^m}g\overline{\kappa}\frac{\partial z}{\partial n}\mathrm{d}s, \quad\forall z\in X(K^m).
\end{align}
To ensure the well-definedness of the integral on the boundary $\partial K^m$, we need an $L^2$-estimate of $\frac{\partial z}{\partial n}$ for each $z\in X(K^m)$.
\begin{theorem}\label{thm:pw-Regularity}
For any $w\in L^2(K^m)$, let $z(w)\in X(K^m)$ satisfy
\begin{equation}\label{eq:pde-dual}
\left\{\begin{aligned}
\overline{\mathcal{L}} z(w)&=w && \text{ in } K^m,\\
z&=0 &&\text{ on }\partial K^m.
\end{aligned}\right.
\end{equation}
Then for some constant ${C}_{{\rm weak}} $ independent of $(\epsilon,h,H)$, there holds
\begin{align*}
\left\|\frac{\partial z(w)}{\partial n}\right\|_{L^2(\partial K^m)}
&\leq { C}_{{\rm weak}}\sqrt{H+2mh}\normL{w}{K^m}.
\end{align*}
\end{theorem}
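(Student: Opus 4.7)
\medskip
\noindent\textbf{Proof plan.}
The statement is a boundary $L^2$-control of the normal flux of the Dirichlet problem \eqref{eq:pde-dual} in terms of the $L^2$ datum $w$, with an explicit $\sqrt{H+2mh}$ scaling. The natural tool is a Rellich--Pohozaev type identity on the (convex, rectangular) cuboid $K^m$. The plan is to multiply the equation $\overline{\mathcal{L}} z(w)=w$ by a \emph{radial-type} test function $\beta\cdot\nabla z(w)$ and carry out two careful integrations by parts, using $z(w)|_{\partial K^m}=0$ to collapse the boundary contribution into one involving only $(\partial_n z(w))^2$.

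First I would fix the vector field $\beta(x):=x-x_{K^m}$, where $x_{K^m}$ denotes the center of $K^m$. Because $K^m$ is a rectangular cuboid, one checks directly on each flat face that $\beta\cdot n$ is nonnegative with
\begin{equation*}
\beta(x)\cdot n(x)\;\gtrsim\;H+2mh\quad\text{a.e. on }\partial K^m,
\qquad \|\beta\|_{L^\infty(K^m)}\lesssim H+2mh,\qquad \|\nabla\beta\|_{L^\infty(K^m)}\lesssim 1.
\end{equation*}
These are the quantitative ingredients that will produce the factor $\sqrt{H+2mh}$.

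Next, testing $\overline{\mathcal{L}} z(w)=w$ against $\beta\cdot\nabla z(w)$ and integrating by parts twice (once to move the divergence off $\overline{\kappa}\nabla z(w)$, once to convert the term $\overline{\kappa}\nabla z(w)\cdot\nabla(\beta\cdot\nabla z(w))$ into a divergence plus bulk commutator terms) yields an identity of the schematic form
\begin{equation*}
\int_{\partial K^m}(\overline{\kappa}n\cdot n)(\beta\cdot n)\left|\frac{\partial z(w)}{\partial n}\right|^2\d s
\;\lesssim\;\left|\int_{K^m}w(\beta\cdot\nabla z(w))\d x\right|
+\bigl(1+\|\nabla\overline{\kappa}\|_{L^\infty}\|\beta\|_{L^\infty}\bigr)\|\nabla z(w)\|_{L^2(K^m)}^2,
\end{equation*}
where I have used $\nabla z(w)=(\partial_n z(w))\,n$ on $\partial K^m$ to identify the boundary term. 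The prefactor $\overline{\kappa}n\cdot n\geq \overline{\alpha}$ by uniform ellipticity, and $\beta\cdot n\gtrsim H+2mh$ by the choice of $\beta$; the assumption $\overline{\kappa}\in W^{1,\infty}$ from Assumption \ref{ass:macroscale} controls the $\nabla\overline{\kappa}$ term.

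Finally, I close the estimate by invoking the standard Poincar\'e-based energy estimate for \eqref{eq:pde-dual} on $K^m$, namely $\|\nabla z(w)\|_{L^2(K^m)}\lesssim (H+2mh)\|w\|_{L^2(K^m)}$, together with Cauchy--Schwarz on the bulk term $\int w(\beta\cdot\nabla z(w))\d x$. Substituting these and dividing by $H+2mh$ on both sides yields
\begin{equation*}
\left\|\frac{\partial z(w)}{\partial n}\right\|_{L^2(\partial K^m)}^2\;\lesssim\;(H+2mh)\|w\|_{L^2(K^m)}^2,
\end{equation*}
which is the desired bound. The main obstacles I anticipate are (i) justifying the Rellich identity on the non-smooth boundary $\partial K^m$ (a cuboid), which I would resolve either by approximating $K^m$ by smooth convex domains or by working face-by-face since $\beta$ and $n$ are piecewise constant/affine; and (ii) tracking the $W^{1,\infty}$ terms generated by $\nabla\overline{\kappa}$ so that the final constant $C_{\rm weak}$ depends only on $\overline{\alpha},\overline{\beta}$, and $\|\nabla\overline{\kappa}\|_{L^\infty(D)}$, hence is independent of $(\epsilon,h,H)$ as required.
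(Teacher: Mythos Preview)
Your Rellich--Pohozaev argument is correct and takes a genuinely different route from the paper. The paper argues in three steps: (i) the energy bound $\|\nabla z\|_{L^2(K^m)}\lesssim (H+2mh)\|w\|_{L^2(K^m)}$ via Friedrichs' inequality (which you also use); (ii) rewriting the equation as $-\overline{\kappa}\Delta z = w + \nabla\overline{\kappa}\cdot\nabla z$ and invoking the Miranda--Talenti estimate on convex domains to obtain the quantitative bound $|z|_{H^2(K^m)}\lesssim\|w\|_{L^2(K^m)}$; and (iii) a scaled trace inequality $\|\nabla z\|_{L^2(\partial K^m)}^2\lesssim (H+2mh)|z|_{H^2(K^m)}^2 + (H+2mh)^{-1}\|\nabla z\|_{L^2(K^m)}^2$. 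Your approach bypasses the quantitative $H^2$ bound altogether: the Rellich identity produces the boundary term $\int_{\partial K^m}(\beta\cdot n)(\overline{\kappa}n\cdot n)|\partial_n z|^2$ directly, balanced only against bulk terms of order $\|\nabla z\|_{L^2(K^m)}^2$ and $\|w\|_{L^2}\|\beta\|_{L^\infty}\|\nabla z\|_{L^2}$, both handled by the $H^1$ energy estimate; the $\sqrt{H+2mh}$ scaling then falls out transparently from the geometry of $\beta$ on the cuboid. This is more self-contained and avoids citing Miranda--Talenti and a specific trace constant. Two small caveats: to justify the integrations by parts you still need $z\in H^2(K^m)$ qualitatively (so convex $H^2$ regularity is implicitly used, just not its constant), and for oversampling domains $K^m$ abutting the global boundary $\Gamma$ the lower bound $\beta\cdot n\gtrsim H+2mh$ relies on the shape-regularity assumption \eqref{eq:c-diameter}; the paper's trace-theorem route faces the analogous issue through the domain-dependent trace constant.
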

\begin{proof}
In the proof, we abbreviate $z(w)$ to $z$. Note that $z\in X(K^m)$ is the unique solution to the following weak formulation
\begin{align*}
 \int_{K^m}\overline{\mathcal{L}}(z) q\;\dx=\int_{K^m}wq\dx,\quad \forall q\in H^1_{0}(K^m).
\end{align*}
By taking $q:=z$ and applying integration by parts, we arrive at
\begin{align*}
\overline{\alpha}\normL{\nabla z}{K^m}^2\leq\normE{ z}{K^m}^2=\int_{K^m}wz\dx.
\end{align*}
Since $z|_{\partial K^m}=0$, the Friedrichs inequality \cite {MR117419}, together with \eqref{eq:diam-subdomain}, leads to
\begin{align}\label{est:1}
\normL{\nabla z}{K^m}&\leq \frac{\op{diam}(K^m)}{\pi\overline{\alpha}}\normL{w}{K^m}\nonumber\\
&\leq \frac{\sqrt{d}}{\pi\overline{\alpha}}(H+2mh)\normL{w}{K^m}.
\end{align}
Note that the governing equation in \eqref{eq:pde-dual} implies
\begin{align*}
-\overline{\kappa}\Delta z-\nabla\overline{\kappa}\cdot\nabla z=w,\quad \text{in }K^m.
\end{align*}
By the estimate \eqref{est:1} and the fact that $\|\nabla\overline{\kappa}\|_{L^{\infty}(K^m)}\lesssim 1$ from Assumption \ref{ass:macroscale}, we derive
\begin{align*}
\normL{\Delta z}{K^m}\lesssim \normL{w}{K^m}.
\end{align*}
By the so-called Miranda-Talenti estimate on a convex domain \cite{MR0775683}, we can obtain the following {a priori} estimate
\begin{align}\label{est-delta}
|{z}|_{H^2(K^m)}&\leq \normL{\Delta z}{K^m}
\lesssim \normL{w}{K^m} .
\end{align}
Note the defining relation $
\frac{\partial z}{\partial n}=\nabla z\cdot n.$
Next, we invoke a quantitative trace theorem on an open  bounded domain with a Lipschitz boundary \cite[Theorem 1.5.1.10]{MR0775683} and obtain the following estimate on $\left\|\frac{\partial z}{\partial n}\right\|_{L^2(\partial K^m)}$:
\begin{align*}
\left\|\frac{\partial z}{\partial n}\right\|_{L^2(\partial K^m)}&\lesssim
 \delta^{1/2}(H+2mh)^{1/2}|{z}|_{H^2(K^m)}+
\delta^{-1/2}(H+2mh)^{-1/2}\|\nabla z\|_{L^{2}(K^m)},\quad \forall\delta>0,
\end{align*}
with the hidden constant being independent of the size of the subdomain $K^m$. By noting the scaling assumption \eqref{eq:c-diameter}, then taking $\delta:=1$ and combining the result with the estimates \eqref{est:1} and \eqref{est-delta}, we derive the desired assertion.
\end{proof}


Finally, we can present the main result of this part.
\begin{theorem}\label{thm:veryweak-homogenized}
Let Assumption \ref{ass:macroscale} hold, and let $g\in L^2(\partial K^m)$. Then the nonstandard variational form \eqref{eq:nonstd-variational} is well-posed. Moreover, there exists a constant ${C}_{{\rm weak}}$ independent of $(\epsilon,h,H)$ such that
\[
\normL{v}{K^m}\leq { C}_{{\rm weak}}\sqrt{H+2mh}\|g\|_{L^2(\partial K^m)}.
\]
\end{theorem}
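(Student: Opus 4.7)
The plan is to invoke the Lions transposition method, with Theorem \ref{thm:pw-Regularity} providing the crucial analytic input. The idea is to view $v$ as an element of $L^2(K^m) \cong (L^2(K^m))^*$ determined by a continuous linear functional on the range of $\overline{\mathcal{L}}$ on $X(K^m)$, and then to identify this range with $L^2(K^m)$ itself.

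First, I would verify that the map $\overline{\mathcal{L}}: X(K^m) \to L^2(K^m)$ is a bijective isomorphism. Existence and uniqueness of $z(w) \in X(K^m)$ solving \eqref{eq:pde-dual} for any prescribed datum $w \in L^2(K^m)$ follow from Lax--Milgram applied in $H^1_0(K^m)$ together with the fact that $\overline{\kappa} \in \mathcal{M}(\overline{\alpha},\overline{\beta};K^m) \cap (W^{1,\infty}(K^m))^{d\times d}$, which ensures $\overline{\mathcal{L}} z \in L^2(K^m)$. Hence every $w \in L^2(K^m)$ is written as $w = \overline{\mathcal{L}} z(w)$ with $z(w) \in X(K^m)$ depending continuously on $w$.

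Next, I would define the linear functional $F: L^2(K^m) \to \mathbb{R}$ by $F(w) := -\int_{\partial K^m} g\,\overline{\kappa}\,\partial_n z(w)\,\mathrm{d}s$. Using Cauchy--Schwarz on $\partial K^m$, the $L^\infty$-bound on $\overline{\kappa}$, and Theorem \ref{thm:pw-Regularity}, I obtain
\begin{equation*}
|F(w)| \le \|g\|_{L^2(\partial K^m)}\,\|\overline{\kappa}\|_{L^\infty(K^m)}\, C_{\mathrm{weak}}\sqrt{H+2mh}\,\|w\|_{L^2(K^m)},
\end{equation*}
so $F$ is continuous on $L^2(K^m)$ with norm controlled by $C_{\mathrm{weak}}\sqrt{H+2mh}\,\|g\|_{L^2(\partial K^m)}$, after absorbing $\|\overline{\kappa}\|_{L^\infty}$ into the constant. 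The Riesz representation theorem then produces a unique $v \in L^2(K^m)$ with $F(w) = \int_{K^m} v\,w\,\mathrm{d}x$. Writing $w = \overline{\mathcal{L}} z$ for arbitrary $z \in X(K^m)$, this identity is exactly \eqref{eq:nonstd-variational}, and the Riesz norm identity yields the claimed bound $\|v\|_{L^2(K^m)} \le C_{\mathrm{weak}}\sqrt{H+2mh}\,\|g\|_{L^2(\partial K^m)}$.

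Uniqueness of $v$ (and hence well-posedness) is a direct consequence of the surjectivity of $\overline{\mathcal{L}}: X(K^m) \to L^2(K^m)$: if two solutions $v_1,v_2$ coincide in the weak sense \eqref{eq:nonstd-variational}, then $\int_{K^m}(v_1-v_2)\,\overline{\mathcal{L}} z\,\mathrm{d}x=0$ for every $z\in X(K^m)$, which forces $v_1 = v_2$. The only real analytical work is the boundary trace estimate $\|\partial_n z(w)\|_{L^2(\partial K^m)} \lesssim \sqrt{H+2mh}\,\|w\|_{L^2(K^m)}$, and this has already been carried out in Theorem \ref{thm:pw-Regularity}; the rest of the argument is the standard transposition machinery. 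I do not anticipate any genuine obstacle beyond keeping the dependence on $\op{diam}(K^m)$ explicit so that the final constant depends only on $\sqrt{H+2mh}$ and is independent of $(\epsilon,h,H)$.
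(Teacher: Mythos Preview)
Your proposal is correct and follows the same transposition strategy as the paper, with Theorem~\ref{thm:pw-Regularity} as the sole analytic input. The paper's own proof is terser: it simply notes that for any $w\in L^2(K^m)$ the variational identity \eqref{eq:nonstd-variational} together with Theorem~\ref{thm:pw-Regularity} gives $\int_{K^m} v\,w\,\mathrm{d}x \le C_{\mathrm{weak}}\sqrt{H+2mh}\,\|g\|_{L^2(\partial K^m)}\|w\|_{L^2(K^m)}$, and then takes $w:=v$. Your version is in fact more complete, since you spell out the existence part of well-posedness via the surjectivity of $\overline{\mathcal{L}}:X(K^m)\to L^2(K^m)$ and Riesz representation, whereas the paper's proof records only the a~priori bound and leaves the existence to the Lions--Magenes reference.
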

\begin{proof}
For all $w\in L^2(K^m)$, we obtain from \eqref{eq:nonstd-variational} and Theorem \ref{thm:pw-Regularity} that
\begin{align*}
\int_{K^m} v w\;\dx\leq {C}_{\text{weak}}\sqrt{H+2mh}\normL{w}{K^m}\|g\|_{L^2(\partial K^m)}.
\end{align*}
We get the desired assertion by taking $w:=v$ in the equality.
\end{proof}

 \subsection{Local interpolation error estimate}
Now we establish the local approximation properties of the local multiscale space $\{\Phi_{K,j}^{\operatorname{ms},m}\}_{j=1}^{d_K}$ to the exact solution $u^{\epsilon}$ in each subdomain $K^m$, in the $L^2$-norm. This is proved by using a series of intermediate quantities bridging the exact solution $u^{\epsilon}$ with the candidate function $\widehat{u}_{K^m}$, which are defined by the linear interpolation over $\gamma_K^m$ and the harmonic extension of the macroscopic problem with the involved discrepancy rigorously justified by either Theorem \ref{thm:veryweak-homogenized} or Assumption \ref{ass:macroscale}. The local energy estimate in each coarse element $K$ is then proved by employing a Cacciopoli type inequality (or reverse Poincar\'{e} inequality).

First we define a crucial local problem. Let $\widehat{u}_{K^m}\in H^1_{\Gamma,0}(K^m)$ be the harmonic extension:
\begin{equation}\label{eq:111}
\left\{
\begin{aligned}
\mathcal{L} \widehat{u}_{K^m}&=0 \quad&&\text{ in } K^m\\
\widehat{u}_{K^m}&=\mathcal{I}_{\gamma_K^m}u_0\quad&&\text{ on }\gamma_K^m\\
\widehat{u}_{K^m}&=0\quad &&\text{ on }\Gamma_K^m.
\end{aligned}
\right.
\end{equation}
The boundary condition of $\widehat{u}_{K^m}$ is the linear interpolation $\mathcal{I}_{\gamma_K^m}u_0$ of the macroscopic solution $u_0$:
\begin{align*}
\mathcal{I}_{\gamma_K^m}u_0:=\sum_{j=1}^{d_K}u_0(x_{K^m,j})\psi_{K^m,j}.
\end{align*}
Note that the H\"{o}lder estimate for elliptic problems \cite[Theorems 8.22 and 8.29]{MR1814364} ensures the macroscopic solution $u_0\in C^{0,\alpha}(D)\cap H^2(D)$ for some $\alpha\in (0,1)$. Thus, the boundary condition in problem \eqref{eq:111} is well-defined.
Then the definitions of the intermediate bases \eqref{eq:lp-os} imply that $\widehat{u}_{K^m}$ can be expressed as
\begin{align}\label{eq:loc-def}
\widehat{u}_{K^m}:=\sum_{j=1}^{d_K}u_{0}(x_{K^m,j})\Phi_{K,j}^{\operatorname{ms},m}.
\end{align}
Note that the local approximator $\widehat{u}_{K^m}$ encodes the microscopic information via the heterogeneous elliptic operator $\mathcal{L}$ and the $d-1$-linear interpolation of the macroscopic solution $u_0$ over $\gamma_K^m$.
\begin{lemma}[Local interpolation error of $u^{\epsilon}-\widehat{u}_{K^m}$]\label{lem:local-approximate}
Let Assumption \ref{ass:macroscale} hold. For each $K\in\mathcal{T}_H$, let $\widehat{u}_{K^m}$ be the local nodal interpolation of $u_0|_{K^m}$ over $K^m$ using the intermediate bases defined in \eqref{eq:loc-def}.
Then the following two estimates hold
\begin{align*}
\|u^{\epsilon}-\widehat{u}_{K^m}\|_{L^2(K^m)}&\lesssim
\normL{u^{\epsilon}-u_0}{K^m}+(H+2mh)^2\left(\|u_0\|_{H^2(K^m)}+\normL{f}{K^m}\right)\\&+
\epsilon^{\delta}\normL{f}{K^m},\\
\|u^{\epsilon}-\widehat{u}_{K^m}\|_{H^1_{\kappa}(K)}&\lesssim
(mh)^{-1}\normL{u^{\epsilon}-u_0}{K^m}+(H+2mh)^{2}(mh)^{-1}\|u_0\|_{H^2(K^m)}
\\&+\left(H+2mh+(mh)^{-1}\epsilon^{\delta}\right)\normL{f}{K^m}.
\end{align*}
\end{lemma}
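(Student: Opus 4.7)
The plan is to introduce two auxiliary local functions on $K^m$ that bridge $u^{\epsilon}$ and $\widehat{u}_{K^m}$, and then decompose the difference via the triangle inequality, bounding each intermediate piece by one of four tools: (a) Assumption \ref{ass:macroscale} read locally through Definition \ref{def:macro-comp}, (b) Theorem \ref{thm:veryweak-homogenized}, (c) standard energy plus Friedrichs estimates on $H^1_0(K^m)$, and (d) a Caccioppoli-type inequality based on the cut-off $\eta_K$ from \eqref{eq:pum-property}. Define $\overline{u}_L,u^{\epsilon}_L\in H^1_{\Gamma,0}(K^m)$ as the solutions of $\overline{\mathcal{L}}\,\overline{u}_L=f$ and $\mathcal{L}u^{\epsilon}_L=f$ in $K^m$, both subject to Dirichlet data $\mathcal{I}_{\gamma_K^m}u_0$ on $\gamma_K^m$ and $0$ on $\Gamma_K^m$. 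By construction, $\overline{u}_L$ is the macroscopic component of $u^{\epsilon}_L$ on $K^m$, and $u^{\epsilon}_L-\widehat{u}_{K^m}\in H^1_0(K^m)$.

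For the $L^2$ estimate, I split
\[
u^{\epsilon}-\widehat{u}_{K^m}=(u^{\epsilon}-u_0)+(u_0-\overline{u}_L)+(\overline{u}_L-u^{\epsilon}_L)+(u^{\epsilon}_L-\widehat{u}_{K^m}),
\]
and bound the four pieces separately. The first summand contributes $\|u^{\epsilon}-u_0\|_{L^2(K^m)}$. For the second, $\overline{\mathcal{L}}(u_0-\overline{u}_L)=0$ in $K^m$ with trace $u_0-\mathcal{I}_{\gamma_K^m}u_0$ on $\gamma_K^m$ and $0$ on $\Gamma_K^m$ (using $u_0|_{\Gamma}=0$); Theorem \ref{thm:veryweak-homogenized} combined with a standard $Q^1$-interpolation-plus-trace bound $\|u_0-\mathcal{I}_{\gamma_K^m}u_0\|_{L^2(\gamma_K^m)}\lesssim(H+2mh)^{3/2}\|u_0\|_{H^2(K^m)}$ produces the term $(H+2mh)^2\|u_0\|_{H^2(K^m)}$. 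For the third, I invoke Assumption \ref{ass:macroscale} locally on the pair $(u^{\epsilon}_L,\overline{u}_L)$ to obtain $\|\overline{u}_L-u^{\epsilon}_L\|_{L^2(K^m)}\lesssim\epsilon^{\delta}\|f\|_{L^2(K^m)}$. For the fourth, $\mathcal{L}(u^{\epsilon}_L-\widehat{u}_{K^m})=f$ in $K^m$ with zero trace, so the energy identity followed by two applications of the Friedrichs inequality yields $(H+2mh)^2\|f\|_{L^2(K^m)}$. Summing the four bounds gives the first claim.

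For the $H^1_{\kappa}$ estimate on $K$, I use only the shorter decomposition $u^{\epsilon}-\widehat{u}_{K^m}=(u^{\epsilon}-u^{\epsilon}_L)+(u^{\epsilon}_L-\widehat{u}_{K^m})$. The second piece is controlled directly on $K^m$ by a single application of energy plus Friedrichs, giving $\|u^{\epsilon}_L-\widehat{u}_{K^m}\|_{H^1_{\kappa}(K^m)}\lesssim(H+2mh)\|f\|_{L^2(K^m)}$. The first piece is $\mathcal{L}$-harmonic in $K^m$ and vanishes on $\Gamma_K^m$, so testing its weak formulation against $\eta_K^2(u^{\epsilon}-u^{\epsilon}_L)$ (an admissible test function because $\eta_K$ vanishes near $\gamma_K^m$ while $u^{\epsilon}-u^{\epsilon}_L$ vanishes on $\Gamma_K^m$) and handling the cross term with Young's inequality gives the Caccioppoli-type bound $\|u^{\epsilon}-u^{\epsilon}_L\|_{H^1_{\kappa}(K)}\lesssim(mh)^{-1}\|u^{\epsilon}-u^{\epsilon}_L\|_{L^2(K^m)}$. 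Crucially, the $L^2$-norm on the right is bounded by only the summands (i)--(iii) above (piece (iv) is absent in this decomposition), so the $(mh)^{-1}$ factor multiplies only $\|u^{\epsilon}-u_0\|_{L^2(K^m)}$, $(H+2mh)^2\|u_0\|_{H^2(K^m)}$, and $\epsilon^{\delta}\|f\|_{L^2(K^m)}$; combining with the bound on the second piece produces the stated coefficient $(H+2mh+(mh)^{-1}\epsilon^{\delta})\|f\|_{L^2(K^m)}$ rather than the much larger $(mh)^{-1}(H+2mh)^2\|f\|_{L^2(K^m)}$ that would arise from a direct Caccioppoli application to the full difference.

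The main obstacle is step (iii): although Assumption \ref{ass:macroscale} is stated globally for homogeneous Dirichlet data, its application to the local pair $(u^{\epsilon}_L,\overline{u}_L)$ with non-homogeneous boundary datum $\mathcal{I}_{\gamma_K^m}u_0$ is precisely what the clause \emph{``the involved constant is independent of the boundary data and domain''} is designed to justify; identifying $\overline{u}_L$ as the macroscopic component of $u^{\epsilon}_L$ in the sense of Definition \ref{def:macro-comp} is the conceptual bridge. A secondary delicacy is matching the $\sqrt{H+2mh}$ factor from Theorem \ref{thm:veryweak-homogenized} with the $(H+2mh)^{3/2}$ trace-interpolation factor in step (ii), since this is the only place where the $H^2$-regularity of $u_0$ is consumed and it determines the overall $(H+2mh)^2$ scaling.
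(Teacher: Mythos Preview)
Your proposal is correct and follows essentially the same route as the paper: the auxiliary functions $u^{\epsilon}_L$ and $\overline{u}_L$ coincide with the paper's $u_L$ and $\overline{u}_L$, the four-term $L^2$ split matches the paper's Step~1 plus Step~3 (the bubble $u^{\epsilon}_L-\widehat{u}_{K^m}$ is the paper's $u_L^{\mathrm{B}}$), and your Caccioppoli argument on the $\mathcal{L}$-harmonic piece $u^{\epsilon}-u^{\epsilon}_L$ is exactly the paper's Step~2. Your observation that the $(mh)^{-1}$ factor must not touch the bubble contribution is precisely why the paper organizes the proof around $e_{K^m}=u^{\epsilon}-u_L$ first and only afterwards adds back the bubble.
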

\begin{proof}
First, we define a local function $u_{L}\in H^1_{\Gamma,0}(K^m)$ by
\begin{equation*}
\left\{
\begin{aligned}
\mathcal{L} u_{L}&=f \quad&&\text{ in } K^m\\
u_{L}&=\mathcal{I}_{\gamma_K^m}u_0\quad&&\text{ on }\gamma_K^m\\
u_{L}&=0\quad &&\text{ on }\Gamma_K^m.
\end{aligned}
\right.
\end{equation*}
Note that this function $u_{L}$ encodes the microscale information contained in the heterogeneous elliptic operator $\mathcal{L}$, the source term $f$ and the $d-1$-linear interpolation of the macroscopic solution $u_0$ over $\gamma_K^{m}$. Let $e_{K^m}:=u^{\epsilon}-u_{L}$. We divide the rest of the proof into three steps.

\noindent \underline{Step 1.} We bound the error $e_{K^m}$ in the $L^2(K^m)$-norm.
Note that the macroscopic solution $u_0$ from \eqref{eq:model-hom-abstract} satisfies the following equation
\begin{equation*}
\left\{
\begin{aligned}
\overline{\mathcal{L}} u_0&=f \quad&&\text{ in } K^m\\
u_0&=u_0\quad&&\text{ on }\gamma_K^m\\
u_0&=0\quad&&\text{ on }\Gamma_K^m.
\end{aligned}\right.
\end{equation*}
Let $\overline{u}_{L}\in H_{\Gamma,0}^1(K^m)$ represent an approximation to $u_0|_{K^m}$ using the $d-1$-linear interpolation over $\gamma_K^m$, which satisfies
\begin{equation*}
\left\{
\begin{aligned}
\overline{\mathcal{L}} \overline{u}_{L}&=f \quad&&\text{ in } K^m\\
\overline{u}_{L}&=\mathcal{I}_{\gamma_K^m}u_0\quad&&\text{ on }\gamma_K^m\\
\overline{u}_{L}&=0\quad &&\text{ on } \Gamma_K^m.
\end{aligned}
\right.
\end{equation*}
Definition \ref{def:macro-comp} implies that  $\overline{u}_{L}$ is the macroscopic component of $u_{L}$ over $K^m$.
Then by the approximation property of linear interpolation, the following $L^2$-error estimate over $\gamma_K^m$ holds
\begin{align*}
\normL{u_0-\overline{u}_{L}}{\gamma_K^m}\lesssim (H+2mh)^{3/2}|u_0|_{H^{3/2}(\gamma_K^m)}.
\end{align*}
Upon invoking Theorem \ref{thm:veryweak-homogenized} and the trace inequality, we obtain
\begin{align}\label{eq:1/2}
\normL{u_0-\overline{u}_{L}}{K^m}&\lesssim (H+2mh)^{2}\|u_0\|_{H^{2}(K^m)}.
\end{align}
Since $\overline{u}_{L}$ is the macroscopic component of $u_L$ in the subdomain $K^m$, and $\overline{u}_{L}=u_L$ on $\partial K^m$,  Assumption \ref{ass:macroscale} indicates
\begin{align}\label{eq:000}
\normL{u_{L}-\overline{u}_{L}}{K^m}&\lesssim\epsilon^{\delta}\normL{f}{K^m}.
\end{align}
Together with \eqref{eq:1/2} and the triangle inequality, this estimate leads to
\begin{align}
\normL{e_{K^m}}{K^m}
&\lesssim
\normL{u^{\epsilon}-u_0}{K^m}+\normL{u_0-\overline{u}_{L}}{K^m}+\normL{\overline{u}_{L}-u_{L}}{K^m}\nonumber\\
&\lesssim \normL{u^{\epsilon}-u_0}{K^m}+(H+2mh)^{2}\|u_0\|_{H^2(K^m)}
+\epsilon^{\delta}\normL{f}{K^m}.\label{eq:222l2}
\end{align}
\noindent \underline{Step 2.} We estimate the error $e_{K^m}$ in the $H^1(K)$-seminorm. By definition, the error $e_{K^m}$ satisfies
\begin{equation*}
\left\{
\begin{aligned}
\mathcal{L} e_{K^m}&=0 \quad&&\text{ in } K^m\\
e_{K^m}&=u^{\epsilon}-\mathcal{I}_{\gamma_K^m}u_0\quad&&\text{ on }\gamma_K^m\\
e_{K^m}&=0\quad &&\text{ on }\Gamma_K^m.
\end{aligned}
\right.
\end{equation*}
Next, testing the first equation by $\eta_K^2e_{K^m}$, together with integration by parts and Young's inequality, we derive
\begin{align*}
\int_{K^m}\eta_K^2\kappa^{\epsilon}|\nabla {e}_{K^m}|^2\dx\leq
4\int_{K^m}\kappa^{\epsilon}
|\nabla\eta_K|^2 {e}_{K^m}^2\,\dx, 
\end{align*}
which, combining with the property of the cutoff function $\eta_K$ in \eqref{eq:pum-property}, implies
\begin{align*}
\normE{{e}_{K^m}}{K}^2&\leq\int_{K^m}\kappa^{\epsilon}\eta_K^2|\nabla {e}_{K^m}|^2\dx\\
&\leq
4\int_{K^m}\kappa^{\epsilon}
|\nabla\eta_K|^2 {e}_{K^m}^2\,\dx \\
&\leq 4(mh)^{-2}\int_{K^m}\kappa^{\epsilon} {e}_{K^m}^2\,\dx\\
&\leq 4(mh)^{-2}\beta\normL{{e}_{K^m}}{K^m}^2.
\end{align*}
Hence, together with \eqref{eq:222l2}, we obtain
\begin{align}\label{eq:1/4}
\normE{e_{K^m}}{K}
\lesssim &(mh)^{-1}\normL{u^{\epsilon}-u_0}{K^m}+(H+2mh)^{2}(mh)^{-1}\|u_0\|_{H^2(K^m)}\nonumber
\\&+(mh)^{-1}\epsilon^{\delta}\normL{f}{K^m}.
\end{align}
\noindent \underline{Step 3.} We derive the final estimate. Note that the local solution $u_{L}$ can be split into two parts $u_{L}:=u^{\op{B}}_L+\widehat{u}_{K^m}$, with the first term being the local bubble part defined by
\begin{equation}\label{eq:100011}
\left\{
\begin{aligned}
\mathcal{L} u^{\op{B}}_L&=f \quad&&\text{ in } K^m\\
u^{\op{B}}_L&=0\quad &&\text{ on }\partial K^m.
\end{aligned}
\right.
\end{equation}
Then \cite[Lemma 5.1]{fu2019edge}
implies
\begin{align*}
\normE{u^{\op{B}}_L}{K^m}\lesssim (H+2mh)\|f\|_{K^m}.
\end{align*}
Moreover, Friedrichs inequality yields
\begin{align*}
\normL{u^{\op{B}}_L}{K^m}\lesssim (H+2mh)^2\|f\|_{K^m}.
\end{align*}
These two estimates, \eqref{eq:222l2}, \eqref{eq:1/4} and the triangle inequality lead to the desired assertion.
\end{proof}

A similar argument leads to the following result. 
\begin{lemma}[Local interpolation error of $u_0-\overline{\widehat{u}}_{K^m}$]\label{lem:local-approximate-macroscopic}
For each $K\in\mathcal{T}_H$, let $\widehat{u}_{K^m}$ and $\overline{\widehat{u}}_{K^m}$ be the local nodal interpolation of $u_0|_{K^m}$ over $K^m$ using the intermediate bases defined in \eqref{eq:loc-def}, and its macroscopic component in $K^m$, respectively. Then the following two estimates hold
\begin{align*}
\|u_0-\overline{\widehat{u}}_{K^m}\|_{L^2(K^m)}\lesssim&
(H+2mh)^2\left(\|u_0\|_{H^2(K^m)}+
\normL{f}{K^m}\right),\\
\|u_0-\overline{\widehat{u}}_{K^m}\|_{H^1_{\kappa}(K)}\lesssim&
(H+2mh)^{2}(mh)^{-1}\|u_0\|_{H^2(K^m)}
+\left(H+2mh\right)\normL{f}{K^m}.
\end{align*}
\end{lemma}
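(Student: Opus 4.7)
The plan is to mirror the three-step structure of Lemma \ref{lem:local-approximate} but work entirely within the macroscopic regime, so that Assumption \ref{ass:macroscale} (the $\epsilon^{\delta}$ bound \eqref{eq:1/3-abstract}) is no longer needed and the microscopic remainder terms simply drop out. The key observation is that all the auxiliary functions introduced in the previous proof have explicit macroscopic components that satisfy analogous boundary value problems with $\overline{\mathcal{L}}$ in place of $\mathcal{L}$.

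First, I would reuse the auxiliary $\overline{u}_L\in H^1_{\Gamma,0}(K^m)$ defined in the proof of Lemma \ref{lem:local-approximate}, which satisfies $\overline{\mathcal{L}}\,\overline{u}_L=f$ in $K^m$ with boundary data $\mathcal{I}_{\gamma_K^m}u_0$ on $\gamma_K^m$ and $0$ on $\Gamma_K^m$. By linearity of the macroscopic-component operator together with the decomposition $u_L=u_L^{\op B}+\widehat{u}_{K^m}$ used in Step 3 of that proof, one has
\begin{equation*}
\overline{u}_L \;=\; \overline{\widehat{u}}_{K^m} + \overline{u_L^{\op B}},
\end{equation*}
where $\overline{u_L^{\op B}}$ is the macroscopic component of the bubble part defined in \eqref{eq:100011}. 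Since $u_L^{\op B}=0$ on $\partial K^m$, Definition \ref{def:macro-comp} shows that $\overline{u_L^{\op B}}$ solves $\overline{\mathcal{L}}\,\overline{u_L^{\op B}}=f$ in $K^m$ with homogeneous Dirichlet data. Standard Lax--Milgram/Friedrichs estimates for the uniformly elliptic operator $\overline{\mathcal{L}}$ then yield
\begin{equation*}
\|\overline{u_L^{\op B}}\|_{L^2(K^m)}\lesssim (H+2mh)^2\,\|f\|_{L^2(K^m)},\qquad
\|\nabla \overline{u_L^{\op B}}\|_{L^2(K^m)}\lesssim (H+2mh)\,\|f\|_{L^2(K^m)}.
\end{equation*}

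For the $L^2$-estimate, I would write $u_0-\overline{\widehat{u}}_{K^m}=(u_0-\overline{u}_L)+\overline{u_L^{\op B}}$, invoke the bound \eqref{eq:1/2} already proved inside Lemma \ref{lem:local-approximate} (which is purely macroscopic and gives $\|u_0-\overline{u}_L\|_{L^2(K^m)}\lesssim (H+2mh)^2\|u_0\|_{H^2(K^m)}$), and combine with the bubble bound above via the triangle inequality to obtain the claimed $L^2$-estimate.

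For the $H^1_\kappa(K)$-estimate, note that both $u_0$ and $\overline{u}_L$ satisfy $\overline{\mathcal{L}}\,(\cdot)=f$ in $K^m$, so $u_0-\overline{u}_L$ is $\overline{\mathcal{L}}$-harmonic in $K^m$ with zero trace on $\Gamma_K^m$. Testing by $\eta_K^2(u_0-\overline{u}_L)$ and using \eqref{eq:pum-property} exactly as in Step 2 of the previous proof (with $\overline{\mathcal{L}}$ in place of $\mathcal{L}$, followed by the equivalence of the $\overline{\kappa}$- and $\kappa^{\epsilon}$-weighted $H^1$ seminorms) gives the Caccioppoli-type estimate
\begin{equation*}
\|u_0-\overline{u}_L\|_{H^1_\kappa(K)}\lesssim (mh)^{-1}\|u_0-\overline{u}_L\|_{L^2(K^m)}\lesssim (mh)^{-1}(H+2mh)^2\|u_0\|_{H^2(K^m)}.
\end{equation*}
Adding the direct $(H+2mh)\|f\|_{L^2(K^m)}$ bound for $\|\overline{u_L^{\op B}}\|_{H^1_\kappa(K)}$ then yields the second inequality. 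The proof presents no genuine obstacle; the only point requiring care is the decomposition $\overline{u}_L=\overline{\widehat{u}}_{K^m}+\overline{u_L^{\op B}}$, which relies on the linearity of the macroscopic-component map and on the fact that the bubble has zero boundary trace so its macroscopic component keeps that trace.
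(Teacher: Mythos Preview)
Your proposal is correct and follows essentially the same route as the paper: split $u_0-\overline{\widehat{u}}_{K^m}=(u_0-\overline{u}_L)+\overline{u}_L^{\op B}$, reuse the bound \eqref{eq:1/2} for the first piece together with a Caccioppoli step (now for the $\overline{\mathcal{L}}$-harmonic difference), and control the macroscopic bubble $\overline{u}_L^{\op B}$ by the usual Friedrichs/energy estimates. Your remark that the Caccioppoli inequality yields the $\overline{\kappa}$-weighted seminorm and must then be converted to $H^1_\kappa(K)$ via the uniform ellipticity of both coefficients is a point the paper leaves implicit.
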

\begin{proof}
By definition \ref{def:macro-comp},  $\overline{\widehat{u}}_{K^m}\in H^1_{\Gamma,0}(K^m)$  satisfies
\begin{equation*}
\left\{
\begin{aligned}
\overline{\mathcal{L}} \overline{\widehat{u}}_{K^m}&=0 \quad&&\text{ in } K^m\\
\overline{\widehat{u}}_{K^m}&=\mathcal{I}_{\gamma_K^m}u_0\quad&&\text{ on }\gamma_K^m\\
\overline{\widehat{u}}_{K^m}&=0\quad &&\text{ on }\Gamma_K^m.
\end{aligned}
\right.
\end{equation*}
The error estimate of $\normL{u_0-\overline{u}_{L}}{K^m}$ has been established in \eqref{eq:1/2}, which is harmonic in the oversampled subdomain $K^m$. Hence, the Cacciopoli type inequality as in \underline{Step 2} of the proof of Lemma \ref{lem:local-approximate} leads to
\begin{align*}
\|u_0-\overline{u}_{L}\|_{H^1_{\kappa}(K)}
\lesssim (mh)^{-1}\normL{u_0-\overline{u}_{L}}{K^m}.
\end{align*}
Note that the local solution $\overline{u}_{L}$ can be split into two parts $\overline{u}_{L}:=\overline{u}_{L}^{\op{B}}+\overline{\widehat{u}}_{K^m}$, with $\overline{u}_{L}^{\op{B}} $ satisfying \begin{equation}\label{eq:100011000}
\left\{
\begin{aligned}
\overline{\mathcal{L}} \overline{u}_{L}^{\op{B}}&=f \quad&&\text{ in } K^m\\
\overline{u}_{L}^{\op{B}}&=0\quad &&\text{ on }\partial K^m.
\end{aligned}
\right.
\end{equation}
Then an argument similar to \underline{Step 3} of the proof of Lemma \ref{lem:local-approximate} implies
\begin{align*}
\normL{\overline{u}_{L}^{\op{B}}}{K^m}+(H+2mh)\normE{\overline{u}_{L}^{\op{B}}}{K^m}\lesssim (H+2mh)^2\|f\|_{K^m}.
\end{align*}
Hence, by the triangle inequality, we obtain the desired results.
\end{proof}

Next, we define a global function $\widehat{u}\in \oplus_{K\in\mathcal{T}_H}H^1(K)$ elementwise by
\begin{align}\label{eq:666666}
\widehat{u}|_{K}:=\widehat{u}_{K^m}|_K,\quad \forall K\in\mathcal{T}_H,
\end{align}
with $\widehat{u}_{K^m}\in \op{span}\{\Phi_{K,j}^{\operatorname{ms},m}\}_{j=1}^{d_K}$, cf. \eqref{eq:loc-def}. By the definition, $\widehat{u}$ may have discontinuity along the interior edges $\mathcal{E}_H$. Nonetheless, the discontinuity can be precisely controlled. This result plays a crucial role in deriving the global error estimate in Theorem \ref{theorem:global-approximation} below.
\begin{lemma}\label{lem:discontinuity}
For any $K,K'\in\mathcal{T}_H$ such that ${K}\cap {K'}=\gamma\in\mathcal{E}_H$, there holds
\begin{align*}
\left\|\widehat{u}|_{K}-\widehat{u}|_{K'}\right\|_{C(\gamma)}
\lesssim &(mh)^{-d/2}\left(\normL{u^{\epsilon}-u_0}{K^m\cup K'^m}+(H+2mh)^2\|u_0\|_{H^2(K^m\cup K'^m)}\right)\\&+(mh)^{-d/2}\left((H+2mh)^{2}+\epsilon^{\delta} \right)\normL{f}{K^m\cup K'^m}.\nonumber
\end{align*}
\end{lemma}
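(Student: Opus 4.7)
The plan is to transfer the $L^2$-approximation bound of Lemma~\ref{lem:local-approximate} into a pointwise estimate on the coarse edge $\gamma$ via an interior $L^\infty$-bound of Moser type, and then combine the contributions from each side of $\gamma$ by the triangle inequality. Since $u^{\epsilon}$ is H\"older continuous across any interior coarse edge (by de Giorgi--Nash applied to the weak solution of a uniformly elliptic equation with $L^2$ right-hand side) and each $\widehat{u}_{K^m}$ is likewise H\"older continuous, the residual $w_{K^m}:=\widehat{u}_{K^m}-u^{\epsilon}$ is well defined pointwise on $\gamma$, and we decompose
\begin{align*}
\widehat{u}|_{K}(x)-\widehat{u}|_{K'}(x)=w_{K^m}(x)-w_{K'^m}(x),\quad x\in\gamma.
\end{align*}
Since $\mathcal{L}\widehat{u}_{K^m}=0$ in $K^m$ and $\mathcal{L}u^{\epsilon}=f$ in $D$, each residual satisfies $\mathcal{L}w_{K^m}=-f$ in $K^m$, and similarly on $K'^m$.

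The key geometric observation is that for every $x\in\gamma\subset\partial K$ the ball $B(x,mh)$ lies inside $K^m$: by the construction $K^m=\bigcup\{\tau\in\mathcal{T}_h:\mathrm{dist}(\tau,K)\leq mh\}$, any $y\in B(x,mh)$ lies in some $\tau$ with $\mathrm{dist}(\tau,K)\leq|y-x|\leq mh$, hence $\tau\subset K^m$. Moser's local $L^\infty$-bound for weak solutions of divergence-form uniformly elliptic equations \cite[Theorem 8.17]{MR1814364} then yields
\begin{align*}
\sup_{B(x,mh/2)}|w_{K^m}|\lesssim (mh)^{-d/2}\normL{w_{K^m}}{B(x,mh)}+(mh)^{2-d/2}\normL{f}{B(x,mh)},
\end{align*}
where the $(mh)^{2-d/2}$ exponent comes from Moser's source-term contribution with $q=2>d/2$ admissible for $d\leq 3$. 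Taking the supremum over $x\in\gamma$ and enlarging the integration sets to $K^m$ gives
\begin{align*}
\|w_{K^m}\|_{C(\gamma)}\lesssim (mh)^{-d/2}\normL{w_{K^m}}{K^m}+(mh)^{2-d/2}\normL{f}{K^m}.
\end{align*}

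To finish, I substitute the $L^2(K^m)$-bound for $w_{K^m}$ supplied by Lemma~\ref{lem:local-approximate} and absorb the remaining Moser source contribution via $(mh)^{2-d/2}\normL{f}{K^m}=(mh)^{-d/2}(mh)^2\normL{f}{K^m}\leq (mh)^{-d/2}(H+2mh)^{2}\normL{f}{K^m}$. Repeating the argument on $K'^m$ and combining by the triangle inequality, together with monotonicity of the $L^2$-norm under $K^m,K'^m\subset K^m\cup K'^m$, delivers the claimed estimate. The only non-routine ingredient is invoking Moser's local boundedness in exactly the correctly scaled form above; once this is in hand, the lemma is a direct repackaging of Lemma~\ref{lem:local-approximate}.
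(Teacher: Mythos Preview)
Your proof is correct and follows essentially the same route as the paper: bridge the jump across $\gamma$ through the single-valued $u^{\epsilon}$, convert the $L^2(K^m)$ local error of Lemma~\ref{lem:local-approximate} into a pointwise bound on $\gamma$ via an interior $L^{\infty}$--$L^{2}$ estimate for elliptic equations, and absorb the source-term remainder. The paper invokes the interior H\"older estimates of Han--Lin \cite[Theorems~4.1 and 4.14]{MR2777537} and obtains $(H+2mh)^{2-d/2}\normL{f}{K^m}$ for that remainder, whereas you invoke Moser's local boundedness \cite[Theorem~8.17]{MR1814364} with radius $mh$ and obtain $(mh)^{2-d/2}\normL{f}{K^m}$; both terms are dominated by $(mh)^{-d/2}(H+2mh)^{2}\normL{f}{K^m}$, so the difference is cosmetic.
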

\begin{proof}
Let $\widehat{e}_{K^m}:=u^{\epsilon}|_{K^m}-\widehat{u}_{K^m}$ denote the local error. By definition, it satisfies
\begin{equation*}
\left\{
\begin{aligned}
\mathcal{L} \widehat{e}_{K^m}&=f \quad&&\text{ in } K^m\\
\widehat{e}_{K^m}&=u^{\epsilon}-\mathcal{I}_{\gamma_K^m}u_0\quad&&\text{ on }\gamma_K^m\\
\widehat{e}_{K^m}&=0\quad &&\text{ on }\Gamma_K^m.
\end{aligned}
\right.
\end{equation*}
Note that the interior H\"{o}lder estimate for elliptic problems \cite[Theorems 4.1 and 4.14]{MR2777537} and H\"{o}lder's inequality imply
\begin{align}\label{eq:interior-uniform}
\|\widehat{e}_{K^m}\|_{C(\overline{K})}
\lesssim {(mh)}^{-d/2}\normL{u^{\epsilon}-\widehat{u}_{K^m}}{K^m}+(H+2mh)^{2-d/2}\normL{f}{K^m}.
\end{align}
Since $u^{\epsilon}$ is single-valued over $\gamma$, together with \eqref{eq:interior-uniform}, we obtain
\begin{align*}
\sup_{x\in\gamma}\left|\widehat{u}|_{K}(x)-\widehat{u}|_{K'}(x)\right|&\leq
\sup_{x\in\gamma}\left(\left|\widehat{u}_{K^m}(x)-u^{\epsilon}(x)\right|+\left|u^{\epsilon}(x)-\widehat{u}_{K'^m}(x)\right|\right)\\
&\lesssim {(mh)}^{-d/2}\left(\normL{\widehat{e}_{K^m}}{K^m}+\normL{\widehat{e}_{{K'}^m}}{{K'}^m}\right)\\
&+(H+2mh)^{2-d/2}\normL{f}{K^m\cup K'^m}.
\end{align*}
Finally, Lemma \ref{lem:local-approximate} yields the desired assertion.
\end{proof}

Similarly, we can estimate the jump of the macroscopic component of $\widehat{u}$ over the interior edges $\mathcal{E}_H$. 
\begin{lemma}\label{lem:discontinuity-macroscopic}
For any $K,K'\in\mathcal{T}_H$ such that ${K}\cap {K'}=\gamma\in\mathcal{E}_H$, there holds
\begin{align*}
\left\|\overline{\widehat{u}}_{K^m}-\overline{\widehat{u}}_{K'^m}\right\|_{C(\gamma)}
\lesssim &(mh)^{-d/2}(H+2mh)^2\left(\|u_0\|_{H^2(K^m\cup K'^m)}+\normL{f}{K^m\cup K'^m}\right).
\end{align*}
\end{lemma}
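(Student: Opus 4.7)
The plan is to mirror the proof of Lemma \ref{lem:discontinuity}, but now carried out entirely at the macroscopic level, which should actually simplify matters since no $\epsilon^\delta$ correction is needed. First I would introduce the local macroscopic error $e^{0}_{K^m} := u_0|_{K^m} - \overline{\widehat{u}}_{K^m}$. Using the boundary value problem that $\overline{\widehat{u}}_{K^m}$ solves (recalled in the proof of Lemma \ref{lem:local-approximate-macroscopic}) together with the equation for $u_0$ from Assumption \ref{ass:macroscale}, I see that $e^{0}_{K^m}$ satisfies
\begin{equation*}
\left\{
\begin{aligned}
\overline{\mathcal{L}} e^{0}_{K^m} &= f &&\text{ in } K^m,\\
e^{0}_{K^m} &= u_0 - \mathcal{I}_{\gamma_K^m}u_0 &&\text{ on } \gamma_K^m,\\
e^{0}_{K^m} &= 0 &&\text{ on } \Gamma_K^m.
\end{aligned}
\right.
\end{equation*}

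Next, since $u_0$ is single-valued across the shared edge $\gamma \subset \partial K \cap \partial K'$, the triangle inequality gives
\begin{equation*}
\sup_{x\in\gamma}\bigl|\overline{\widehat{u}}_{K^m}(x) - \overline{\widehat{u}}_{K'^m}(x)\bigr| \leq \sup_{x\in\gamma}\bigl|e^{0}_{K^m}(x)\bigr| + \sup_{x\in\gamma}\bigl|e^{0}_{K'^m}(x)\bigr|.
\end{equation*}
Because $K$ lies at distance at least $mh$ from $\partial K^m$ by construction of the oversampled subdomain, I would invoke the same interior H\"older estimate for elliptic equations \cite[Theorems 4.1 and 4.14]{MR2777537} that was used to derive \eqref{eq:interior-uniform}; it applies to $\overline{\mathcal{L}}$ since $\overline{\kappa}\in (W^{1,\infty}(D))^{d\times d}$. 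This yields
\begin{equation*}
\|e^{0}_{K^m}\|_{C(\overline{K})} \lesssim (mh)^{-d/2}\|e^{0}_{K^m}\|_{L^2(K^m)} + (H+2mh)^{2-d/2}\|f\|_{L^2(K^m)}.
\end{equation*}

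Finally, I would insert the $L^2$-bound on $e^{0}_{K^m}$ provided by Lemma \ref{lem:local-approximate-macroscopic}, namely $\|e^{0}_{K^m}\|_{L^2(K^m)} \lesssim (H+2mh)^2\bigl(\|u_0\|_{H^2(K^m)} + \|f\|_{L^2(K^m)}\bigr)$. Since $mh\leq H+2mh$, we have $(H+2mh)^{2-d/2} \leq (mh)^{-d/2}(H+2mh)^2$, so the source-term contribution $(H+2mh)^{2-d/2}\|f\|_{L^2(K^m)}$ is absorbed into the main bound. Summing the contributions from $K^m$ and $K'^m$ delivers the stated estimate. I do not anticipate a serious obstacle here, since, unlike in Lemma \ref{lem:discontinuity}, the argument never needs to pass between the microscopic and macroscopic problems and thus avoids any appeal to the qualitative estimate \eqref{eq:1/3-abstract}; the only subtlety is verifying that the interior H\"older (Moser-type) estimate is available for $\overline{\mathcal{L}}$, which follows immediately from the $W^{1,\infty}$ regularity of $\overline{\kappa}$.
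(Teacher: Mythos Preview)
Your proposal is correct and follows essentially the same approach the paper intends: the paper gives no separate proof of this lemma but simply remarks that it is obtained ``similarly'' to Lemma~\ref{lem:discontinuity}, and your argument is precisely that analogue, replacing $u^\epsilon$, $\widehat{u}_{K^m}$, and $\mathcal{L}$ by $u_0$, $\overline{\widehat{u}}_{K^m}$, and $\overline{\mathcal{L}}$, and invoking Lemma~\ref{lem:local-approximate-macroscopic} in place of Lemma~\ref{lem:local-approximate}. Your observation that the $(H+2mh)^{2-d/2}\|f\|_{L^2}$ term is absorbed by the main bound via $mh\le H+2mh$ is exactly the reason no separate $\epsilon^\delta$ or source contribution appears in the final estimate.
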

\subsection{Global interpolation error estimates}
The next two theorems give the global interpolation error estimates, which are the main results of this work.
\begin{theorem}[Global approximation property of $V_H^{\operatorname{ms},m;\phi}$]\label{theorem:global-approximation}
Let Assumption \ref{ass:macroscale} hold. Let $u^{\epsilon}$ be the solution to problem \eqref{eq:model} and $V_H^{\operatorname{ms},m;\phi} $ be the multiscale finite element space with $m$ fine-scale oversampled layers defined in \eqref{eq:v-ov}. Then there holds
\begin{align*}
\min_{v_H^{\operatorname{ms},m}\in V_H^{\operatorname{ms},m;\phi}}\normb{u^{\epsilon}-v_H^{\operatorname{ms},m}}
&\lesssim
\left((mh)^{-1}+\left(\frac{H+2mh}{mh}\right)^{d/2}(H+2mh)^{-1}\right)\epsilon^{\delta}\normL{f}{D}\\
&+\left(\frac{H+2mh}{mh}\right)^{d/2}(H+2mh)\left(\normL{f}{D}+\|u_0\|_{H^2(D)}\right)\\
&+(mh)^{-1}(H+2mh)^{2}\|u_0\|_{H^2(D)}.
\end{align*}
\end{theorem}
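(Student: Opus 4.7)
The plan is to follow the two-function strategy outlined in the introduction: first work with the elementwise (and in general multi-valued at interior nodes) approximator $\widehat{u}$ defined in \eqref{eq:666666}, then produce an admissible element $v_H^{\operatorname{ms},m}\in V_H^{\operatorname{ms},m;\phi}$ by averaging nodal values, and finally split
\[
\normb{u^{\epsilon}-v_H^{\operatorname{ms},m}}\leq \normb{u^{\epsilon}-\widehat{u}}+\normb{\widehat{u}-v_H^{\operatorname{ms},m}}
\]
and bound the two pieces separately. Since the Lagrange property \eqref{eq:lagrange-property} rewrites $\widehat{u}|_K=\sum_{p=1}^{d_K}\widehat{u}_{K^m}(x_{K,p})\phi_{K,p}^{\operatorname{ms},m}$, the natural candidate is
\[
v_H^{\operatorname{ms},m}:=\sum_{i\in\mathcal{J}_H}\bar{c}_i\,\phi_i^{\operatorname{ms},m},\qquad \bar{c}_i:=\frac{1}{\#\mathcal{K}_i}\sum_{K\in\mathcal{K}_i}\widehat{u}_{K^m}(x_i),
\]
with $\mathcal{K}_i:=\{K\in\mathcal{T}_H : x_i\in\overline{K}\}$.

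For the first piece, I would square the $H^1_{\kappa}(K)$-estimate of Lemma \ref{lem:local-approximate}, sum over $K\in\mathcal{T}_H$, invoke the finite-overlap property \eqref{eq:finite-overlap} to aggregate the patchwise $L^2$-norms into a single $L^2(D)$-norm, and then use $\normL{u^{\epsilon}-u_0}{D}\lesssim \epsilon^{\delta}\|f\|_{L^2(D)}$ from Assumption \ref{ass:macroscale}. This yields
\[
\normb{u^{\epsilon}-\widehat{u}}\lesssim (mh)^{-1}\epsilon^{\delta}\|f\|_{L^2(D)}+(mh)^{-1}(H+2mh)^{2}\|u_0\|_{H^2(D)}+(H+2mh)\|f\|_{L^2(D)}.
\]
For the second piece, on each $K$ I would write $(\widehat{u}-v_H^{\operatorname{ms},m})|_K=\sum_p(\widehat{u}_{K^m}(x_{K,p})-\bar{c}_{i(K,p)})\phi_{K,p}^{\operatorname{ms},m}$, apply the local basis energy bound \eqref{eq:loc-harmonic-energy} together with $d_K=\mathcal{O}(1)$ to obtain
\[
\normE{\widehat{u}-v_H^{\operatorname{ms},m}}{K}^2\lesssim (H+2mh)^{d-2}\sum_p|\widehat{u}_{K^m}(x_{K,p})-\bar{c}_{i(K,p)}|^2,
\]
and dominate each nodal deviation by $\max_{K'\in\mathcal{K}_{i(K,p)}}|\widehat{u}_{K^m}(x_{K,p})-\widehat{u}_{K'^m}(x_{K,p})|$. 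For face-adjacent $K,K'$ this maximum is controlled directly by Lemma \ref{lem:discontinuity}; for $K,K'$ meeting only at a lower-dimensional facet I would chain a dimension-bounded number of face-adjacent hops. Summing with \eqref{eq:finite-overlap} and substituting Assumption \ref{ass:macroscale} produces
\[
\normb{\widehat{u}-v_H^{\operatorname{ms},m}}\lesssim \left(\frac{H+2mh}{mh}\right)^{d/2}\left[(H+2mh)^{-1}\epsilon^{\delta}\|f\|_{L^2(D)}+(H+2mh)\bigl(\|u_0\|_{H^2(D)}+\|f\|_{L^2(D)}\bigr)\right],
\]
and adding the two estimates gives the claimed bound.

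I expect the second-piece estimate to be the main obstacle. Two factors have to combine just right: squaring the $(mh)^{-d/2}$ prefactor of Lemma \ref{lem:discontinuity} and then summing via \eqref{eq:finite-overlap} produces a $(mh)^{-d}$ that must be balanced against the $(H+2mh)^{d-2}$ coming from the energy bound \eqref{eq:loc-harmonic-energy} on the local basis, so that the characteristic factor $((H+2mh)/(mh))^{d/2}$ in the statement emerges cleanly. Chaining the pointwise jump estimate through nodes shared by non-face-adjacent coarse elements is routine bookkeeping, but must be organized so that no spurious powers of $m$ or $h$ creep into the final sum over $K\in\mathcal{T}_H$.
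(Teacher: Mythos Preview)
Your proposal is correct and follows essentially the same route as the paper: the paper constructs the identical candidate $w_H^{\operatorname{ms},m}=\sum_{i}\widehat{c}_i\phi_i^{\operatorname{ms},m}$ by averaging the nodal values of $\widehat{u}_{K^m}$, splits $\normb{u^{\epsilon}-w_H^{\operatorname{ms},m}}$ via the triangle inequality, and bounds the two pieces using Lemma~\ref{lem:local-approximate} and Lemma~\ref{lem:discontinuity} combined with \eqref{eq:loc-harmonic-energy}, exactly as you outline. Your explicit remark about chaining the jump estimate through face-adjacent elements when $K,K'$ share only a lower-dimensional facet is a point the paper leaves implicit, so your write-up is in fact slightly more careful there.
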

\begin{proof}
It suffices to construct a specific function $w_H^{\operatorname{ms},m}\in V_H^{\operatorname{ms},m;\phi}$ of the form
\begin{align*}
w_H^{\op{ms},m}:=\sum_{i\in\mathcal{J}_H} \widehat{c}_i\phi_{i}^{\op{ms},m},
\end{align*}
such that
\begin{align*}
\normb{u^{\epsilon}-w_H^{\operatorname{ms},m}}&\lesssim
\left((mh)^{-1}+\left(\frac{H+2mh}{mh}\right)^{d/2}(H+2mh)^{-1}\right)\epsilon^{\delta}\normL{f}{D}\\
&+\left(\frac{H+2mh}{mh}\right)^{d/2}(H+2mh)\left(\normL{f}{D}+\|u_0\|_{H^2(D)}\right)\\
&+(mh)^{-1}(H+2mh)^{2}\|u_0\|_{H^2(D)}.
\end{align*}
Our aim is to identify the sequence $\{\widehat{c}_i\}_{i\in\mathcal{J}_H}$ using the local properties of the exact solution $u^{\epsilon}$. Since $\{\phi_{i}^{\op{ms},m}\}_{i\in\mathcal{J}_H}$ has the Lagrange interpolation property over the sequence of interior nodes $\{x_i\}_{i\in\mathcal{J}_H}$ so that $\widehat{c}_i=w_H^{\operatorname{ms},m}(x_i)$ for all $i\in\mathcal{J}_H$.
We divide the proof into four steps.

\noindent \underline{Step 1.} First, we derive a local representation of $\widehat{u}|_K$. Let $\widehat{u}\in \oplus_{K\in\mathcal{T}_H}H^1(K)$ be defined in \eqref{eq:666666}. Then by Lemma \ref{lem:local-approximate}, the global function $\widehat{u}$ has good local approximation properties to the restriction $u^{\epsilon}|_{K}$ of the exact solution $u^{\epsilon}$ on the subdomain $K$. Moreover, by definition \eqref{eq:loc-def},  the equivalence of the intermediate bases and the actual bases in the sense
\begin{align*}
\op{span}\left\{\Phi_{K,j}^{\operatorname{ms},m}:j=1,\cdots,d_K\right\}=\op{span}\left\{\phi_{K,j}^{\operatorname{ms},m}:j=1,\cdots,d_K\right\},
\end{align*}
and the Lagrange interpolation property of $\{\phi_{i}^{\op{ms},m}\}_{i\in\mathcal{J}_H}$ over the sequence of interior nodes $\{x_i\}_{i\in\mathcal{J}_H}$, there holds
\begin{align*}
\widehat{u}|_{K}:=\sum_{j=1}^{d_K}u_{0}(x_{K^m,j})\Phi_{K,j}^{\operatorname{ms},m}
:=\sum_{j=1}^{d_K}\widehat{u}_{K^m}(x_{K,j})\phi_{K,j}^{\operatorname{ms},m}.
\end{align*}

\noindent \underline{Step 2.} Next we define a candidate function $w_H^{\op{ms},m}\in V_H^{\operatorname{ms},m;\phi}$ by averaging the nodal values of $\widehat{u}$. Note that the constructed auxiliary global function $\widehat u$ defined in \eqref{eq:666666} fails to fall into the multiscale space, i.e., $\widehat{u}\notin V_H^{\operatorname{ms},m;\phi}$, due to nonunique nodal values at $x_i$ for $i\in\mathcal{J}_H$. We present a candidate function to approximate $u^{\epsilon}|_K$ on each coarse element $K$ by averaging the nodal values:
\begin{align}
w_H^{\op{ms},m}:=\sum_{i\in\mathcal{J}_H} \widehat{c}_i\phi_{i}^{\op{ms},m}\in V_H^{\operatorname{ms},m;\phi},\quad \mbox{with }
\label{eq:loc-glo}
\widehat{c}_i:=\frac{1}{d_i}\sum_{K\in\mathcal{T}_H:\;x_i\in {K}}\widehat{u}|_{K}(x_{i }),
\end{align}
with $d_i$ the number of coarse elements sharing the nodes $x_{i}$ for $i\in\mathcal{J}_H$.

\noindent \underline{Step 3.} We estimate the local error between $\widehat{u}$ and $w_H^{\op{ms},m}$.
 For any $K,K'$ such that ${K}\cap {K'}=\gamma\in\mathcal{E}_H$, Lemma \ref{lem:discontinuity} implies that for all $i\in\mathcal{J}_H$ with $x_i\in {\gamma}$, there holds
\begin{align}\label{eq:555}
\left|\widehat{u}|_{K}(x_{i})-\widehat{u}|_{K'}(x_{i})\right|\lesssim&(mh)^{-d/2}\left(\normL{u^{\epsilon}-u_0}{K^m\cup K'^m}+(H+2mh)^2\|u_0\|_{H^2(K^m\cup K'^m)}\right)
\nonumber
\\&+(mh)^{-d/2}\left((H+2mh)^{2}+\epsilon^{\delta} \right)\normL{f}{K^m\cup K'^m}.
\end{align}
Next, we restrict to one coarse element $K\in \mathcal{T}_H$. Let $\{\ell^K_j\}_{j=1}^{d_K}\subset \mathcal{J}_H$ be the global nodal index corresponding to the local nodes $\{x_{K,j}\}_{j=1}^{d_K}$, i.e., $x_{K,j}=x_{\ell_j^K}$ for $j=1,\cdots,d_K$. We derive from the estimates \eqref{eq:loc-glo} and \eqref{eq:555} that
\begin{align}\label{eq:77777777}
\left|\widehat{c}_{\ell_j^K}-\widehat{u}|_{K}(x_{\ell_j^K})\right|
&=\bigg|\frac{1}{d_{\ell_j^K}}\sum_{K'\in\mathcal{T}_H:\;x_{\ell_j^K}\in {K'}}(\widehat{u}|_{K'}(x_{\ell_j^K})-\widehat{u}|_{K}(x_{\ell^K_j}))\bigg|.
\end{align}
Note that the index $\{\ell_j^K\}_{j=1}^{d_K}$ gives the local representation of $w_H^{\op{ms},m}$ in each coarse element $K$:
\begin{align*}
w_H^{\op{ms},m}|_K=\sum_{j=1}^{d_K}\widehat{c}_{\ell_j^K}\phi_{K,j}^{\op{ms},m}.
\end{align*}
Consequently, we obtain
\begin{align}
\normE{w_H^{\op{ms},m}-\widehat{u}_{K^m}}{K}&=\normE{\sum_{j=1}^{d_K}\left(\widehat{c}_{\ell_j^K}-\widehat{u}|_{K}(x_{\ell_{j}^K})\right)\phi_{K,j}^{\operatorname{ms},m}}{K}\nonumber\\
&\leq \sum_{j=1}^{d_K}\left|\widehat{c}_{\ell_j^K}-\widehat{u}|_{K}(x_{\ell_j^K})\right|\normE{\phi_{K,j}^{\operatorname{ms},m}}{K},\label{eq:local-error-coeff0}
\end{align}
By inserting \eqref{eq:77777777} and \eqref{eq:555} into the inequality \eqref{eq:local-error-coeff0}, together with \eqref{eq:loc-harmonic-energy}, we derive
\begin{equation}\label{eq:local-error-coeff}
\begin{aligned}
\normE{w_H^{\op{ms},m}-\widehat{u}|_{K}}{K}\lesssim&
\left(\frac{H+2mh}{mh}\right)^{d/2}(H+2mh)^{-1}\normL{u^{\epsilon}-u_0}{N(K)}\\
&+\left(\frac{H+2mh}{mh}\right)^{d/2}
{(H+2mh)}
\|u_0\|_{H^2(N(K))}
\\
&+\left(\frac{H+2mh}{mh}\right)^{d/2}
 (H+2mh)^{-1}\left(\epsilon^{\delta}+(H+2mh)^2\right)
\normL{f}{N(K)},
\end{aligned}
\end{equation}
where $N(K):=\bigcup\{T^m: T\in\mathcal{T}_H,\; {K}\cap {T}\neq\emptyset\}$ denotes the union of $K^m$ and adjacent oversampled subdomains for $K\in\mathcal{T}_H$.

\noindent \underline{Step 4.} At this step, we derive a global error estimate. By the triangle inequality,
\begin{align*}
\normb{u^{\epsilon}-w_H^{\operatorname{ms},m}}^2&= \sum_{K\in\mathcal{T}_H}\normE{u^{\epsilon}-w_H^{\operatorname{ms},m}}{K}^2\\
&\leq 2\sum_{K\in\mathcal{T}_H}\left(\normE{u^{\epsilon}-\widehat{u}_{K^m}}{K}^2+\normE{\widehat{u}|_{K}-w_H^{\operatorname{ms},m}}{K}^2\right).
\end{align*}
Using Lemma \ref{lem:local-approximate} and \eqref{eq:local-error-coeff}, we obtain
\begin{align*}
\normb{u^{\epsilon}-w_H^{\operatorname{ms},m}}^2\lesssim& \sum_{K\in\mathcal{T}_H}\left((mh)^{-1}+(H+2mh)^{-1}\left(\frac{H+2mh}{mh}\right)^{d/2}\right)^2\normL{u^{\epsilon}-u_0}{N(K)}^2\\
&+\left((H+2mh)^{2}(mh)^{-1}
+\left(\frac{H+2mh}{mh}\right)^{d/2}(H+2mh)
\right)^2\|u_0\|_{H^2(N(K))}^2\\
&+\left(\frac{\epsilon^{\delta}}{mh}+\left(\frac{H+2mh}{mh}\right)^{d/2}(H+2mh)^{-1}\left(\epsilon^{\delta}+(H+2mh)^2\right)\right)^2\normL{f}{N(K)}^2.
\end{align*}
Combining this estimate with the finite overlapping condition \eqref{eq:finite-overlap} and Assumption \ref{ass:macroscale} leads to
\begin{align*}
\normb{u^{\epsilon}-w_H^{\operatorname{ms},m}}\lesssim & \left(\frac{\epsilon^{\delta}}{mh}+\left(\frac{H+2mh}{mh}\right)^{d/2}(H+2mh)^{-1}\left(\epsilon^{\delta}+(H+2mh)^2\right)\right)\normL{f}{D}\\
&+\left((H+2mh)^{2}(mh)^{-1}
+\left(\frac{H+2mh}{mh}\right)^{d/2}(H+2mh)
\right)\|u_0\|_{H^2(D)}.
\end{align*}
This completes the proof of the theorem.
\end{proof}
\begin{theorem}[Global approximation property of $V_H^{\operatorname{ms},m;\varphi}$]\label{theorem:global-approximation-type2}
Let Assumption \ref{ass:macroscale} hold. Let $u^{\epsilon}$ be the solution to problem \eqref{eq:model} and $V_H^{\operatorname{ms},m;\varphi} $ be the multiscale finite element space with $m$ fine-scale oversampled layers defined in \eqref{eq:v-ov}. Then there holds
\begin{align*}
\min_{v_H^{\operatorname{ms},m}\in V_H^{\operatorname{ms},m;\varphi}}\normb{u^{\epsilon}-v_H^{\operatorname{ms},m}}
\lesssim
&(mh)^{-1}\epsilon^{\delta}\normL{f}{D}+(mh)^{-1}(H+2mh)^{2}\|u_0\|_{H^2(D)}\\
&+\left(\frac{H+2mh}{mh}\right)^{d/2}(H+2mh)\left(\normL{f}{D}+\|u_0\|_{H^2(D)}\right).
\end{align*}
\end{theorem}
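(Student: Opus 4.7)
The proof will parallel the argument for Theorem \ref{theorem:global-approximation}, with the crucial change that the jump across coarse edges is measured on the macroscopic components rather than on the multiscale functions themselves. The plan is to reuse the elementwise local approximator $\widehat{u}$ from \eqref{eq:666666} and to build a candidate $w_H^{\op{ms},m}\in V_H^{\op{ms},m;\varphi}$ by averaging macroscopic nodal values, thereby replacing the applications of Lemmas \ref{lem:local-approximate} and \ref{lem:discontinuity} by Lemmas \ref{lem:local-approximate} and \ref{lem:discontinuity-macroscopic}. The key gain is that Lemma \ref{lem:discontinuity-macroscopic} contains neither an $\|u^{\epsilon}-u_0\|_{L^2}$ term nor an extra $\epsilon^{\delta}\|f\|_{L^2}$ term, which is precisely why the bound of Theorem \ref{theorem:global-approximation-type2} drops the factor $\bigl(\tfrac{H+2mh}{mh}\bigr)^{d/2}(H+2mh)^{-1}\epsilon^{\delta}$ present in Theorem \ref{theorem:global-approximation}.

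First, I would fix $K\in\mathcal T_H$ and rewrite $\widehat{u}|_K$ in the type-2 basis. Since $\op{span}\{\Phi_{K,j}^{\op{ms},m}\}_{j=1}^{d_K}=\op{span}\{\varphi_{K,p}^{\op{ms},m}\}_{p=1}^{d_K}$, there exist unique $\{a_p\}$ with $\widehat{u}_{K^m}=\sum_p a_p\varphi_{K,p}^{\op{ms},m}$ on $K^m$. Applying $\overline{\mathcal L}$-harmonic extension (linearity of the macroscopic component) and the Lagrange property $\overline{\varphi}_{K,p}^{\op{ms},m}(x_{K,q})=\delta_{pq}$ gives $a_p=\overline{\widehat{u}}_{K^m}(x_{K,p})$. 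Hence the natural type-2 local representation is
\begin{align*}
\widehat{u}|_K=\sum_{j=1}^{d_K}\overline{\widehat{u}}_{K^m}(x_{K,j})\,\varphi_{K,j}^{\op{ms},m}\Big|_K .
\end{align*}

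Second, I would define, in analogy with \eqref{eq:loc-glo},
\begin{align*}
w_H^{\op{ms},m}:=\sum_{i\in\mathcal J_H}\widehat c_i\,\varphi_i^{\op{ms},m},\qquad
\widehat c_i:=\frac{1}{d_i}\sum_{K\in\mathcal T_H:\,x_i\in K}\overline{\widehat{u}}_{K^m}(x_i).
\end{align*}
On each $K$, the difference $w_H^{\op{ms},m}|_K-\widehat{u}|_K$ is a linear combination of $\varphi_{K,j}^{\op{ms},m}|_K$ whose coefficients are, up to averaging, the jumps of $\overline{\widehat{u}}_{(\cdot)^m}$ at the nodes of $K$. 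These jumps are controlled by Lemma \ref{lem:discontinuity-macroscopic}, which produces only the clean bound $(mh)^{-d/2}(H+2mh)^2(\|u_0\|_{H^2(N(K))}+\|f\|_{L^2(N(K))})$. Combined with the energy bound $\normE{\varphi_{K,j}^{\op{ms},m}}{K}\lesssim(H+2mh)^{d/2-1}$ from \eqref{eq:loc-harmonic-energy}, this yields
\begin{align*}
\normE{w_H^{\op{ms},m}-\widehat{u}|_K}{K}\lesssim
\Bigl(\tfrac{H+2mh}{mh}\Bigr)^{d/2}(H+2mh)\bigl(\|u_0\|_{H^2(N(K))}+\|f\|_{L^2(N(K))}\bigr).
\end{align*}

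Third, I would split $\normb{u^{\epsilon}-w_H^{\op{ms},m}}^2\le 2\sum_K\bigl(\normE{u^{\epsilon}-\widehat{u}_{K^m}}{K}^2+\normE{\widehat{u}|_K-w_H^{\op{ms},m}}{K}^2\bigr)$, use the $H^1$ part of Lemma \ref{lem:local-approximate} for the first term, the previous estimate for the second, and then invoke Assumption \ref{ass:macroscale} to replace $\normL{u^{\epsilon}-u_0}{N(K)}$ by $\epsilon^{\delta}\|f\|_{L^2(N(K))}$. The finite-overlap property \eqref{eq:finite-overlap} converts the sum over $K$ into a global $L^2$ norm over $D$, and the stated three-term bound of Theorem \ref{theorem:global-approximation-type2} follows after collecting like terms. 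The main technical obstacle is the first step: verifying that the coefficients in the type-2 expansion of $\widehat u|_K$ are exactly the macroscopic nodal values $\overline{\widehat u}_{K^m}(x_{K,p})$, because this is what turns the relevant jump quantity into the macroscopic-only jump of Lemma \ref{lem:discontinuity-macroscopic} and thereby removes the $\epsilon^{\delta}$-contamination in the averaging step.
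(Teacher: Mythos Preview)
Your proposal is correct and follows essentially the same approach as the paper: the paper also defines the candidate $w_H^{\op{ms},m}$ by averaging the macroscopic nodal values $\overline{\widehat u}_{K^m}(x_i)$, invokes the representation $\widehat{u}_{K^m}=\sum_i \overline{\widehat u}_{K^m}(x_{K,i})\varphi_{K,i}^{\op{ms},m}$, and then repeats Steps~3--4 of Theorem~\ref{theorem:global-approximation} with Lemma~\ref{lem:discontinuity-macroscopic} in place of Lemma~\ref{lem:discontinuity}. Your explicit justification of the coefficient identity via linearity of the macroscopic component and the Lagrange property of $\overline{\varphi}_{K,p}^{\op{ms},m}$ is a welcome detail that the paper merely asserts.
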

\begin{proof}
We present a candidate function to approximate $u^{\epsilon}|_K$ on each coarse element $K$ by averaging the nodal values:
\begin{align}
w_H^{\op{ms},m}:=\sum_{i\in\mathcal{J}_H} \widehat{c}_i\varphi_{i}^{\op{ms},m}\in V_H^{\operatorname{ms},m;\varphi},\quad \mbox{with }
\label{eq:loc-glo-type2}
\widehat{c}_i:=\frac{1}{d_i}\sum_{K\in\mathcal{T}_H:\;x_i\in\overline{K}}\overline{\widehat{u}}_{K^m}(x_{i }).
\end{align}
Note that $\widehat{u}_{K^m}$ can be expressed as 
\begin{align*}
{\widehat{u}}_{K^m}=\sum_{i=1}^{d_K} \overline{\widehat{u}}_{K^m}(x_{K,i})\varphi_{K,i}^{\op{ms},m}. 
\end{align*}
Then by repeating the argument at \underline{Step 3} of the proof of Theorem \ref{theorem:global-approximation} leads to the error estimate between ${\widehat{u}}_{K^m}$ and ${w}_H^{\op{ms},m}$:
\begin{equation*}
\begin{aligned}
\normE{{w}_H^{\op{ms},m}-{\widehat{u}}_{K^m}}{K}\lesssim
&\left(\frac{H+2mh}{mh}\right)^{d/2}
{(H+2mh)}
\left(\|u_0\|_{H^2(N(K))}+\normL{f}{N(K)}\right).
\end{aligned}
\end{equation*}
The remaining proof follows from \underline{Step 4} of the proof of Theorem \ref{theorem:global-approximation}. 
\end{proof}
\begin{remark}[Influence of oversampling size $mh$]
Theorems \ref{theorem:global-approximation} and \ref{theorem:global-approximation-type2} indicate that oversampling is important to control the interpolation error but not the bigger the better. Meanwhile, there is no convergence if $mh=\mathcal{O}(1)$ due to a lack of approximation to the local bubble functions, cf. \eqref{eq:100011}, and insufficient  information transferred to $\widehat{u}_{K^m}$, cf. the estimate \eqref{eq:1/2}.
\end{remark}
\begin{remark}
Let Assumption \ref{ass:macroscale} hold. Note the assumption $\epsilon\ll H$, cf. \eqref{eq:c-diameter}. Then
Theorems \ref{theorem:global-approximation} and  \ref{theorem:global-approximation-type2} imply that fir either Type 1 or Type 2 LMS, if $mh\approx H$, then
\begin{align*}
\min_{v_H^{\operatorname{ms},m}\in V_H^{\operatorname{ms},m;v}}\normb{u^{\epsilon}-v_H^{\operatorname{ms},m}}=\mathcal{O}\left(H+\frac{\epsilon^{\delta}}{H}\right).
\end{align*}
\end{remark}

\section{Interpolation error for special structures}\label{sec:structure}
In this section, we provide two special structures on $\kappa^{\epsilon}$ for which Assumption \ref{ass:macroscale} holds. The two settings have been extensively studied in the literature.
\subsection{Rapidly oscillating periodic coefficient}\label{sec:periodic}
We discuss the case that the permeability coefficient $\kappa^{\epsilon}$ is periodic and highly oscillatory, with $\epsilon$ denoting the period size.
\begin{assumption}[Highly oscillatory periodic coefficients]\label{ass:periodic-structure}
The permeability $\kappa^{\epsilon}(x):=\kappa(\frac{x}{\epsilon})$ with
$\kappa\in \mathcal{M}(\alpha,\beta;Y)$, $\kappa^T=\kappa$ and $Y:=[0,1]^d$ denoting a unit cell.
\end{assumption}

First, we recall known homogenization theory related to problem \eqref{eq:model} with highly oscillatory periodic coefficients.
Analogous to the standard homogenization theory, we denote $y:=x/\epsilon$ to be the fast variable, and $x$ to be the slow variable.
Let $L^{2}_{\#}(Y):=\{ u\in L^{2}(Y):  u \text{ is $Y$-periodic}\}$. Similarly, we can define $H^{1}_{\#}(Y)$. Let
$V_{\#}(Y):=\{v\in H^{1}_{\#}(Y): \int_{Y}v\;\dy=0\}$.

For all $k=1,\cdots,d$, let $\chi_k\in V_{\#}(Y)$ be the solution to the following {\it cell problem}:
\begin{equation*}
 -\nabla\cdot \left ( \kappa
\nabla\chi_k\right ) =\nabla\cdot(\kappa \bm{e}_k),\quad \text{in } Y,
\end{equation*}
where $\{\bm{e}_k\}_{k=1}^d$ is the canonical basis in $\mathbb{R}^d$.
The constant effective coefficient $\overline{\kappa}$ is defined by
\begin{equation*}
\overline{\kappa}\bm{e}_k := \int_Y \kappa(\bm{e}_k+\nabla \chi_k
)  \,\dy,\quad \text{ for } k=1,\cdots,d.
\end{equation*}
Then the homogenized solution is defined by solving $u_0\in V$ such that
\begin{equation}\label{eq:model-hom}
\left\{\begin{aligned}
\overline{\mathcal{L}}u_0:=-\nabla\cdot\left(\overline{\kappa}\nabla u_0\right)&=f &&\quad\text{ in }D,\\
u_0&=0 &&\quad\text{ on } \Gamma.
\end{aligned}\right.
\end{equation}
Moreover, the standard elliptic regularity theory implies
\begin{align}\label{eq:u-0-reg}
|u_0|_{H^2(D)}\lesssim \normL{f}{D}.
\end{align}
Together with \cite[Theorem 3.4.3]{MR3838419}, the following $L^2$-error estimate holds:
\begin{align}\label{eq:1/3}
\normL{u^{\epsilon}-u_0}{D}\lesssim \epsilon \normL{f}{D}.
\end{align}
Next, we show that Assumption \ref{ass:macroscale} holds with $\delta=1$.
\begin{lemma}\label{lem:verify-macroscopic}
Let Assumption \ref{ass:periodic-structure} be fulfilled. Then \eqref{eq:000} holds with $\delta=1$. Hence, Assumption \ref{ass:macroscale} holds.
\end{lemma}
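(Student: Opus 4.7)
The plan is to verify the two quantitative ingredients of Assumption \ref{ass:macroscale} in the periodic setting: the $H^2$-regularity of $u_0$ and the $L^2$-accuracy estimate \eqref{eq:000}. The first is immediate, since the constant positive-definite matrix $\overline{\kappa}$ from \eqref{eq:model-hom} trivially lies in $W^{1,\infty}(D)$, and the Miranda-Talenti estimate on the convex $D$ with constant coefficients gives $|u_0|_{H^2(D)} \lesssim \|f\|_{L^2(D)}$, recorded already as \eqref{eq:u-0-reg}. What remains is to show \eqref{eq:000} with $\delta = 1$, namely $\|u_L - \overline{u}_L\|_{L^2(K^m)} \lesssim \epsilon \|f\|_{L^2(K^m)}$ with a constant independent of $K^m$ and of the boundary data $\mathcal{I}_{\gamma_K^m} u_0$, as demanded by Assumption \ref{ass:macroscale}.

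The central manipulation is to set $w := u_L - \overline{u}_L$; since $u_L$ and $\overline{u}_L$ share Dirichlet boundary data, $w$ vanishes on $\partial K^m$, and subtracting the two governing equations yields $\mathcal{L} w = \nabla \cdot((\kappa^\epsilon - \overline{\kappa})\nabla \overline{u}_L)$ in $K^m$. I would then invoke the classical periodic flux-corrector machinery (see \cite[Chapter 3]{MR3838419}): there exist periodic mean-zero fields $B_{ijk}^\epsilon(x) = B_{ijk}(x/\epsilon)\in L^\infty$, skew-symmetric in $(j,k)$, with $(\kappa^\epsilon - \overline{\kappa})_{ij} = \epsilon\,\partial_{x_k} B_{ijk}^\epsilon$. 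Substituting into the source, the skew-symmetry cancels the doubly-divergent top-order contribution and leaves $\mathcal{L} w = -\epsilon\,\partial_{x_i}(B_{ijk}^\epsilon \partial_{x_j}\partial_{x_k} \overline{u}_L)$ in a weak sense, so that $\|\mathcal{L} w\|_{H^{-1}(K^m)} \lesssim \epsilon\,|\overline{u}_L|_{H^2(K^m)}$. An energy estimate (relying on $w|_{\partial K^m}=0$) combined with Friedrichs's inequality on the convex $K^m$ then yields $\|w\|_{L^2(K^m)}\lesssim \epsilon\,|\overline{u}_L|_{H^2(K^m)}$, and Miranda-Talenti applied to the local macroscopic problem on the convex $K^m$ with the $W^{1,\infty}$ coefficient $\overline{\kappa}$ bounds $|\overline{u}_L|_{H^2(K^m)}$ by $\|f\|_{L^2(K^m)}$ plus a boundary contribution, the latter controlled by the $C^{0,\alpha}$ regularity of $u_0$ and the piecewise linear structure of $\mathcal{I}_{\gamma_K^m} u_0$.

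The main obstacle is the uniformity claim: the hidden constant must be independent of $K^m$ and of the boundary data. The flux-corrector integration by parts generates boundary terms on $\partial K^m$ that must not pollute the estimate; because $w$ itself vanishes on $\partial K^m$, testing the resulting weak formulation against $w$ (rather than an arbitrary test function) absorbs these contributions and produces a clean interior bound. A further subtlety is that, in the direct argument sketched above, $|\overline{u}_L|_{H^2(K^m)}$ is only partly controlled by $\|f\|_{L^2(K^m)}$; the residual boundary contribution must either be absorbed by appealing to the bootstrap structure present in the proof of Lemma \ref{lem:local-approximate}, or else removed by an Aubin-Nitsche style duality refinement that trades a gradient for an additional factor of $\epsilon$. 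The fully rigorous route, which I would ultimately follow, is the two-scale expansion $u_L \approx \overline{u}_L + \epsilon\chi_j(x/\epsilon)\partial_j \overline{u}_L$ supplemented by an explicit boundary-layer corrector as in \cite[Chapter 3]{MR3838419}; this is where the bulk of the technical work lies.
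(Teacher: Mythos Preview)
Your approach differs substantially from the paper's. The paper does not reconstruct any homogenization estimate from scratch; it directly invokes the scale-invariant $L^p$ error bound \cite[Theorem 3.4.3]{MR3838419}: applied to the pair $(u_L,\overline{u}_L)$ on $K^m$ it gives $\|u_L-\overline{u}_L\|_{L^p(K^m)}\lesssim (\epsilon/H)\|\,\cdot\,\|_{W^{2,q}(K^m)}$ with $p=\tfrac{2d}{d-1}$ and dual exponent $q=\tfrac{2d}{d+1}$. Two applications of H\"older's inequality (auxiliary exponents $p_1=q_1=2d$) convert this into an $L^2$--$H^2$ bound, and the resulting volume factor $|K^m|^{1/p_1+1/q_1}\approx H$ cancels the $1/H$, yielding the rate $\epsilon$ in one line. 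The subsequent passage to $\|f\|_{L^2}$ goes through \eqref{eq:u-0-reg} and the scaling relations \eqref{eq:diam-subdomain}--\eqref{eq:c-diameter}, sidestepping the boundary-data issue you flag.

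Your direct flux-corrector step contains a concrete error: the identity $(\kappa^\epsilon-\overline{\kappa})_{ij}=\epsilon\,\partial_{x_k}B^\epsilon_{ijk}$ with $B$ periodic and skew is false in general. The matrix $\kappa-\overline{\kappa}$ has neither cell-mean zero (the homogenized tensor is \emph{not} the plain average of $\kappa$) nor vanishing row-divergence, so no such potential exists. The object that does admit a skew potential is the corrected flux $\kappa(I+\nabla\chi)-\overline{\kappa}$, and extracting it forces the first-order corrector term $\epsilon\chi_k(x/\epsilon)\partial_k\overline{u}_L$ into the expansion --- precisely the two-scale ansatz you defer to as the ``fully rigorous route''. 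That route is correct and, together with the boundary-layer estimate, is essentially the proof of \cite[Theorem 3.4.3]{MR3838419}; so your plan ultimately works, but it amounts to reproving the theorem the paper simply cites rather than offering a shortcut.
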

\begin{proof}
Note that $\overline{u}_{L}$ is the homogenized solution, or the macroscopic component, to $u_L$ in the subdomain $K^m$ and that
$$u_{L}|_{\partial K^m}=\overline{u}_{L}|_{\partial K^m}\in H^1(\partial K^m).$$
By \cite[Theorem 3.4.3]{MR3838419} and the Cauchy-Schwarz inequality, we deduce
\begin{align*}
\normL{u_{L}-\overline{u}_{L}}{K^m}
&\lesssim|K^m|^{1/p_1} \norm{u_{L}-\overline{u}_{L}}_{L^p(K^m)}\\
&\lesssim |K^m|^{1/p_1} \frac{\epsilon}{H} \|u_0\|_{w^{2,q}(K^m)}\\
&\lesssim |K^m|^{1/p_1}|K^m|^{1/q_1} \frac{\epsilon}{H} \|u_0\|_{H^2(K^m)}\\
&= |K^m|^{1/p_1+1/q_1} \frac{\epsilon}{H} \|u_0\|_{H^2(K^m)},
\end{align*}
where the second inequality is due to the scale-invariant $L^p$-error estimate for the homogenized solution
with $p=\frac{2d}{d-1}$ and $q=\frac{2d}{d+1}$ denoting its dual exponent
\cite[Theorem 3.4.3]{MR3838419}, and the first and third inequalities are due to the Cauchy-Schwarz inequality
with $p_1=q_1=2d$. Hence, by combining the estimates \eqref{eq:diam-subdomain}, \eqref{eq:c-diameter} and \eqref{eq:u-0-reg}, we obtain the bound \eqref{eq:000} with $\delta=1$.
\end{proof}

\begin{remark}[Consistency with standard approach]
Let Assumption \ref{ass:periodic-structure} hold. Note the assumption $\epsilon\ll H$, cf. \eqref{eq:c-diameter}. Then
Theorem \ref{theorem:global-approximation-type2} and Lemma \ref{lem:verify-macroscopic} imply that if $mh\approx H$, then
\begin{align*}
\min_{v_H^{\operatorname{ms},m}\in V_H^{\operatorname{ms},m;\varphi}}\normb{u^{\epsilon}-v_H^{\operatorname{ms},m}}=\mathcal{O}\left(H+\frac{\epsilon}{H}\right).
\end{align*} 
Note that this result is consistent with the conforming error defined in \cite[Equation (3.8)]{MR1740386}, which is proved using the standard approach in the context of highly oscillatory periodic coefficients.
\end{remark}

\subsection{Rapidly oscillating locally periodic coefficients}
Next, we discuss the case that the permeability coefficient $\kappa^{\epsilon}$ is locally periodic and rapidly oscillating with a period size $\epsilon$ \cite{MR4578546}.
\begin{assumption}[Highly oscillatory locally periodic coefficients]\label{ass:periodic-structure-local}
The permeability coefficient $k^\epsilon$ takes the form
$\kappa^{\epsilon}(x)=\kappa(x,x/\epsilon)
\in C^{0,s}(\overline{D};L^{\infty}(Y))^{d\times d}$, which is H\"{o}lder continuous of order $s\in (0,1]$ in the first variable and periodic in the second variable. Moreover, $\kappa^{\epsilon}(x)
\in\mathcal{M}(\alpha,\beta;D)$. 
\end{assumption}
First, we define the cell problem for each $x\in D$. For all $k=1,\cdots,d$, let $N_k(x,x/\epsilon)\in V_{\#}(Y)$ be the
solution to the following {\it cell problem}
\begin{equation*}
 -\nabla\cdot \left ( \kappa(x,\cdot)
\nabla N_k(x,\cdot)\right ) =\nabla\cdot(\kappa(x,\cdot) \bm{e}_k)  \quad \text{ in } Y.
\end{equation*}
Then $N_k(x,x/\epsilon)\in C^{0,s}(\overline{D};V_{\#}(Y))^{d\times d}$ is H\"{o}lder continuous of order $s\in (0,1]$ in the first variable \cite[Lemma 4.1]{MR4578546}. Hence the effective coefficient $\overline{\kappa}\in C^{0,s}(\overline{D})^{d\times d}$ is H\"{o}lder continuous of order $s$ \cite[Section 4]{MR4578546}, which is defined by
\begin{equation*}
\overline{\kappa}(x)\bm{e}_k :=  \int_Y \kappa(x,y)(\bm{e}_k+\nabla N_k(x,y)
)  \,\dy , \quad \text{for } k=1,\cdots,d.
\end{equation*}
Note that the computation of the effective coefficient $\overline{\kappa}$ involves an infinite number of cell problems, and thus impractical. This represents a sharp contrast to the periodic setting in Section \ref{sec:periodic}, for which one cell problem is sufficient.

Then the homogenized solution $u_0$ is defined by finding $u_0\in V$ such that
\begin{equation}\label{eq:model-hom-loc-period}
\left\{\begin{aligned}
\overline{\mathcal{L}}u_0:=-\nabla\cdot\left(\overline{\kappa}(x)\nabla u_0\right)&=f &&\quad\text{ in }D,\\
u_0&=0 &&\quad\text{ on } \Gamma.
\end{aligned}\right.
\end{equation}
It is known that the following a priori error estimate holds \cite[Theorem 6.3]{MR4578546}:
\begin{align*}
\normL{u^\epsilon-u_0}{D}\lesssim \epsilon^{s}\normL{f}{D}.
\end{align*}
Moreover, \eqref{eq:000} holds with $\delta=s$ since $\overline{u}_{L}$ is the homogenized solution, or the macroscopic component, to $u_L$ in the subdomain $K^m$. Hence, Assumption \ref{ass:macroscale} holds with $\delta:=s$ for the specific setting. 

\section{Conclusion}\label{sec:conc}
In this work, we have established a new interpolation error analysis of MsFEMs with oversampling for elliptic problems with highly oscillatory rough coefficients, under the assumption of the existence of a macroscopic problem with $L^2$-accuracy. One distinct feature of the new analysis is that it is independent of the solutions to the cell problems, and the first-order corrector in the case with periodic highly oscillating coefficients. The analysis leads to an interpolation error $\mathcal{O}(H+\frac{\epsilon}{H})$ when the oversampled subdomain has a distance of one coarse element away from the target coarse element in the case of highly oscillating (locally) periodic coefficients. Theoretically, it is of great importance to investigate the stability of various global MsFEM formulations in a general setting.

\section*{Acknowledgement}
I would like to thank Daniel Peterseim (University of Augsburg, Germany) for fruitful discussion, carefully reading an early version of the manuscript and constructive comments. The project was started during the follow-up Workshop "Multiscale Problems: Algorithms, Numerical Analysis and Computation", Hausdorff Institute of Mathematics, Bonn, Germany, 2024.

\bibliographystyle{siam}
\bibliography{reference}

\end{document}